\newtheorem{theorem}{Theorem}[section]
\newtheorem{lemma}[theorem]{Lemma}
\newtheorem{prop}[theorem]{Proposition}
\theoremstyle{definition}
\newcommand{\ep}{\varepsilon}
\newcommand{\cH}{\mathcal{H}}
\newcommand{\cF}{\mathcal{F}}
\newcommand{\cG}{\mathcal{G}}
\newcommand{\cP}{\mathcal{P}}
\newcommand{\cI}{\mathcal{I}}
\newcommand{\cT}{\mathcal{T}}
\newcommand{\bP}{\ensuremath{\mathbb{P}}}
\newcommand{\bE}{\ensuremath{\mathbb{E}}}
\newcommand{\bF}{\ensuremath{\mathbb{F}}}
\newcommand{\card}[1]{\left| #1 \right|}
\title{Intersecting families of discrete structures are typically trivial}
\author{
J\'ozsef Balogh
\thanks{ Department of Mathematics, University of Illinois, Urbana, IL 61801, USA and Bolyai Institute, University of Szeged, Szeged, Hungary {\tt jobal@math.uiuc.edu}.
    Research is partially supported by a Simons Fellowship, NSF CAREER Grant DMS-0745185, and Marie Curie FP7-PEOPLE-2012-IIF 327763.}
\and
Shagnik Das
\thanks{Department of Mathematics, ETH, 8092 Zurich, Switzerland {\tt shagnik@ucla.edu}.}
\and
Michelle Delcourt
\thanks{Department of Mathematics,
University of Illinois, Urbana, Illinois 61801, USA {\tt delcour2@illinois.edu}. Research supported by NSF Graduate Research Fellowship DGE 1144245.}
\and
Hong Liu
\thanks{Department of Mathematics,
University of Illinois, Urbana, Illinois 61801, USA {\tt hliu36@illinois.edu}.
}
\and
Maryam Sharifzadeh
\thanks{Department of Mathematics,
University of Illinois, Urbana, Illinois 61801, USA {\tt sharifz2@illinois.edu}.
}}
\begin{document}
\maketitle

\begin{abstract}
The study of intersecting structures is central to extremal combinatorics.  A family of permutations $\cF \subset S_n$ is \emph{$t$-intersecting} if any two permutations in $\cF$ agree on some $t$ indices, and is \emph{trivial} if all permutations in $\cF$ agree on the same $t$ indices.  A $k$-uniform hypergraph is \emph{$t$-intersecting} if any two of its edges have $t$ vertices in common, and \emph{trivial} if all its edges share the same $t$ vertices.

The fundamental problem is to determine how large an intersecting family can be.  Ellis, Friedgut and Pilpel proved that for $n$ sufficiently large with respect to $t$, the largest $t$-intersecting families in $S_n$ are the trivial ones.  The classic Erd\H{o}s--Ko--Rado theorem shows that the largest $t$-intersecting $k$-uniform hypergraphs are also trivial when $n$ is large.  We determine the \emph{typical} structure of $t$-intersecting families, extending these results to show that almost all intersecting families are trivial.  We also obtain sparse analogues of these extremal results, showing that they hold in random settings.

Our proofs use the Bollob\'as set-pairs inequality to bound the number of maximal intersecting families, which can then be combined with known stability theorems.  We also obtain similar results for vector spaces.
\end{abstract}

\section{Introduction} \label{sec:intro}
The fundamental problem in extremal combinatorics asks how large a system can be under certain restrictions.  Once resolved, this can then be strengthened by enumerating such systems and describing their typical structure.  In the context of graph theory, this study was initiated by Erd\H{o}s, Kleitman and Rothschild~\cite{ekr76} in 1976, who proved that almost all triangle-free graphs are bipartite.  In extremal set theory, a landmark result was the determination of the number of antichains among subsets of an $n$-element set by Kleitman~\cite{k69} in 1969.  These results have since inspired a great deal of research over the years, with many classical theorems having been so extended.

Intersecting hypergraphs were first studied in the seminal 1961 paper of Erd\H{o}s, Ko and Rado~\cite{ekr61}.  Not only have versions of the Erd\H{o}s--Ko--Rado theorem been obtained in various other settings, including permutations and vector spaces, but a great deal of modern research is still devoted to proving further extensions.  In this paper, we study intersecting families of discrete systems in various settings, determining their typical structure as $n$, the size of the underlying ground set, tends to infinity.

We will now present our results and briefly review the extremal results regarding intersecting families in these different settings.  We discuss permutations in Section~\ref{subsec-sn}, hypergraphs in Section~\ref{subsec-hyp} and vector spaces in Section~\ref{subsec-vs}.

In what follows, we write $\log$ for logarithms to the base $2$, and $\ln$ for logarithms to the base $e$.

\subsection{Permutations}\label{subsec-sn}
Denote by $S_n$ the symmetric group on $[n]$. A family of permutations $\cF\subseteq S_n$ is \emph{$t$-intersecting} if any two permutations in $\cF$ agree on at least $t$ indices; that is, for any $\sigma,\pi\in\cF$, $\card{\sigma \cap \pi} = |\{i\in [n]:\sigma(i)=\pi(i)\}|\ge t$.  When $t=1$, we simply call such families \emph{intersecting}.  A natural example of a $t$-intersecting family $\cF\subseteq S_n$ is a \emph{trivial} $t$-intersecting family, where there is a fixed $t$-set $I\subseteq [n]$ and values $\{ x_i : i \in I \}$ such that for every $\sigma \in\cF$ and $i\in I$, $\sigma(i)=x_i$. Ellis, Friedgut and Pilpel~\cite{efp} proved that, for $n$ sufficiently large with respect to $t$, a $t$-intersecting family $\cF\subseteq S_n$ has size at most $(n-t)!$, with equality only if $\cF$ is trivial. Our first result determines the typical structure of $t$-intersecting families in $S_n$, showing that trivial families are not just extremal but also typical.

\begin{theorem}\label{thm-perm-all}
For any fixed $t\ge 1$ and $n$ sufficiently large, almost all $t$-intersecting families of permutations in $S_n$ are trivial, and the number of $t$-intersecting families is $\left( \binom{n}{t}^2  t!+ o(1) \right) 2^{(n-t)!}$.
\end{theorem}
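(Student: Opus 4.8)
The plan is to use the container-type strategy advertised in the abstract: bound the number of \emph{maximal} $t$-intersecting families in $S_n$ by a quantity that is sub-exponential in $(n-t)!$, and then combine this with a stability version of the Ellis--Friedgut--Pilpel theorem to control the number of subfamilies of each maximal family. Since every $t$-intersecting family is contained in a maximal one, the total count is at most $\sum_{\cM} 2^{|\cM|}$, where $\cM$ ranges over maximal $t$-intersecting families. The main term $\binom{n}{t}^2 t!\cdot 2^{(n-t)!}$ will come from the trivial families: there are exactly $\binom{n}{t}^2 t!$ trivial maximal families (choose the $t$-set $I$ of indices, the $t$-set of values, and the bijection between them, i.e.\ $\binom{n}{t}^2 t!$ choices), each of size $(n-t)!$, and a positive proportion of their $2^{(n-t)!}$ subfamilies are still trivial with that \emph{exact} center $I$ (in fact all of them that contain enough permutations to pin down $I$; one checks the number of subfamilies \emph{not} of this form is $o(2^{(n-t)!})$ by a union bound, and that different trivial maximal families overlap in only $o(2^{(n-t)!})$ subfamilies).

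The first key step is the bound on the number of maximal $t$-intersecting families. Here I would use the Bollob\'as set-pairs inequality, exactly as the abstract hints. For a maximal family $\cM$, every $\sigma\notin\cM$ must fail to be $t$-intersecting with some $\pi\in\cM$; encoding each permutation $\sigma$ as its graph $\{(i,\sigma(i)):i\in[n]\}\subseteq [n]\times[n]$ turns ``$|\sigma\cap\pi|<t$'' into a statement about set systems, and one extracts from $\cM$ a bounded ``kernel'' of pairs whose behavior determines $\cM$. Concretely, the goal is to show there are at most $2^{o((n-t)!)}$, and ideally at most $n^{O(1)}\cdot c^{?}$ or $2^{\mathrm{poly}(n)}$, maximal families; since $\mathrm{poly}(n) = o((n-t)!)$ for fixed $t$, any such bound suffices. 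This step is the main obstacle: permutations do not have as clean a structure as set systems, so making the set-pairs argument go through — identifying the right finite certificate that pins down a maximal family — requires care. I expect one shows that every maximal $t$-intersecting family is determined by a bounded number of its permutations (a ``generating set'' of size $f(t)$), whence the number of maximal families is at most $(n!)^{f(t)} = 2^{O(n\log n)} = 2^{o((n-t)!)}$.

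The second key step is the stability input. The Ellis--Friedgut--Pilpel machinery (or a Hilton--Milner-type stability statement, which I would cite or adapt) gives that any $t$-intersecting $\cF\subseteq S_n$ with $|\cF|\geq (1-\ep)(n-t)!$ is ``close'' to trivial, and more usefully that any $t$-intersecting family that is \emph{not} contained in a trivial one has size at most $(1-c)(n-t)!$ for some constant $c=c(t)>0$. Feeding this into the sum: the non-trivial maximal families $\cM$ each have $|\cM|\leq (1-c)(n-t)!$, so they contribute at most $2^{o((n-t)!)}\cdot 2^{(1-c)(n-t)!} = o(2^{(n-t)!})$ in total, which is negligible. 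The trivial maximal families contribute $(\binom{n}{t}^2 t! + o(1))2^{(n-t)!}$ after the inclusion--exclusion cleanup mentioned above, and within each, the fraction of subfamilies that are themselves trivial (with a well-defined center) tends to $1$. Combining the two steps yields both the asymptotic count and the ``almost all are trivial'' conclusion. The only delicate bookkeeping is ensuring the overcounting between distinct trivial maximal families — two centers $(I_1,\ldots)$ and $(I_2,\ldots)$ agreeing on $t$ points can share subfamilies — is $o(2^{(n-t)!})$; this follows because any family counted twice lies in an intersection of two distinct trivial families, which has size at most $(n-t-1)!\cdot O(1)$, giving only $2^{O((n-t-1)!)}$ such subfamilies per pair and $n^{O(1)}$ pairs.
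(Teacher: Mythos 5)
Your proposal follows the paper's strategy exactly: bound the number of maximal $t$-intersecting families via a Bollob\'as set-pairs argument applied to the graphs $\{(i,\sigma(i))\}\subseteq[n]\times[n]$, invoke a Hilton--Milner-type stability theorem (the paper uses Ellis's result that the largest non-trivial family has size $(1-1/e+o(1))(n-t)!$), and handle the overlap between the $\binom{n}{t}^2 t!$ maximal trivial families by Bonferroni, using that any two intersect in at most $(n-t-1)!$ permutations.

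One quantitative misstep worth flagging: you predict that every maximal family has a generating set of size $f(t)$, a bounded function of $t$, giving $M \le (n!)^{f(t)} = 2^{O(n\log n)}$. That is not what the set-pairs inequality delivers. Minimality of a generating set $\{\sigma_1,\dots,\sigma_s\}$ gives witnesses $\tau_i$ with $|\sigma_j\cap\tau_i|<t$ iff $i=j$; passing to the $n$-element graphs and applying F\"uredi's $t$-intersecting version of the Bollob\'as inequality to the $2s$ doubled-up pairs yields only $s\le \tfrac12\binom{2n-2t+2}{n-t+1}$, which is exponential in $n$, not bounded in terms of $t$. Consequently the paper's bound is $M\le n^{n2^{2n-2t+1}}$, i.e.\ $\log M = O(n\,4^{n-t}\log n)$, vastly larger than your $O(n\log n)$. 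Fortunately this is still $o((n-t)!)$ for fixed $t$, so your overall architecture survives; but the intermediate claim that the certificate has size $f(t)$ is false and, had the true generating-set size been comparable to $(n-t)!$ rather than $4^n$, the argument would have collapsed, so the gap is real and should not be glossed over as a routine detail. With the corrected bound in hand, the rest of your outline — $\log M + N_1 - N_0 \to -\infty$ for the nontrivial contribution, and $2\log T + N_2 - N_0 \to -\infty$ for the inclusion--exclusion — goes through as you sketched.
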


Additionally, we prove two extensions in the sparse setting of Theorem~\ref{thm-perm-all}. In the first we consider $t$-intersecting families of permutations of size $m$. Note that each maximum trivial $t$-intersecting family has $\binom{(n-t)!}{m}$ subfamilies of size $m$.  The following result shows that, for $m$ not too small\footnote{The lower bound on $m$ here is what we require in our calculations.  It would be interesting to determine how small $m$ can be for this statement to hold.}, the number of non-trivial $t$-intersecting families of $m$ permutations is a lower-order term.

\begin{theorem}\label{thm-perm-fixed}
For any fixed $t\ge 1$, $n$ sufficiently large and $n 2^{2n - 2t + 2} \log n \le m \le (n-t)!$, almost  all $t$-intersecting families of $m$ permutations in $S_n$ are trivial.
\end{theorem}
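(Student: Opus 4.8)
To prove Theorem~\ref{thm-perm-fixed}, the plan is to bound the number of non-trivial $t$-intersecting families of $m$ permutations and to show it is $o\!\left(\binom{(n-t)!}{m}\right)$; since the $\binom{(n-t)!}{m}$ size-$m$ subfamilies of a single maximum trivial family are all trivial and distinct, this makes the non-trivial families a $o(1)$-fraction of all $t$-intersecting families of $m$ permutations, as claimed. The structural observation driving the argument is that every $t$-intersecting family is contained in a maximal one, and that a subfamily of a \emph{trivial} maximal family $\{\si\in S_n:\si(i)=x_i\ \forall i\in I\}$ is itself trivial; hence every non-trivial $t$-intersecting family of size $m$ is a subfamily of some \emph{non-trivial} maximal $t$-intersecting family, and so the number of non-trivial $t$-intersecting families of $m$ permutations is at most $\sum_{\cF}\binom{|\cF|}{m}$, summed over all non-trivial maximal $t$-intersecting families $\cF$. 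Two quantitative inputs remain: an upper bound $N$ on the number of maximal $t$-intersecting families in $S_n$, and an upper bound on the size of a non-trivial one.

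For the first, I would use that a $t$-intersecting family $\cF$ is maximal precisely when it equals its \emph{$t$-closure} $\{\si\in S_n:|\si\cap\pi|\ge t\text{ for all }\pi\in\cF\}$, and extract a small generating subfamily greedily: while the $t$-closure of the permutations chosen so far properly contains $\cF$, choose $\si$ in that closure but outside $\cF$, then (by maximality of $\cF$) choose $\pi\in\cF$ with $|\si\cap\pi|\le t-1$ and add $\pi$ to the generating set. Viewing permutations as perfect matchings in $K_{n,n}$, this yields $\pi_1,\dots,\pi_r\in\cF$ with witnesses $\si_1,\dots,\si_r$ satisfying $|\si_j\cap\pi_i|\ge t$ for $i<j$ but $|\si_j\cap\pi_j|\le t-1$: a skew system of set pairs on $n$-element ground sets whose diagonal intersections have size at most $t-1$. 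The Bollob\'as set-pairs inequality, in its skew form and after discarding the at most $t-1$ common edges on the diagonal, then bounds the length of this sequence---and hence the size of a generating subfamily---by $r\le\binom{2n-2t+2}{n-t+1}\le 2^{2n-2t+2}$. As a maximal family is determined by one of its generating subfamilies, $N\le\sum_{j\le r}\binom{n!}{j}\le (n!)^{r+1}$, whence $\log N=O\!\left(\binom{2n-2t+2}{n-t+1}\,n\log n\right)=O\!\left(2^{2n-2t}\sqrt n\,\log n\right)$, the Stirling factor $\sqrt n$ in the denominator of $\binom{2n-2t+2}{n-t+1}$ being the source of the slack we need below.

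For the second input, I would invoke the known stability results for $t$-intersecting families of permutations---Cameron--Ku and Larose--Malvenuto when $t=1$, and the corresponding statement for general fixed $t$ built on the Ellis--Friedgut--Pilpel theorem~\cite{efp}---to obtain a constant $c=c(t)>0$ with the property that every non-trivial $t$-intersecting family has size at most $(1-c)(n-t)!$. If $m>(1-c)(n-t)!$ there is nothing to prove, as then no non-trivial $t$-intersecting family of size $m$ exists; otherwise $\binom{(1-c)(n-t)!}{m}\le(1-c)^m\binom{(n-t)!}{m}$, so combining with the bound on $N$ the number of non-trivial $t$-intersecting families of $m$ permutations is at most $N\,(1-c)^m\binom{(n-t)!}{m}$. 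Here $N(1-c)^m=2^{\log N-m\log\frac1{1-c}}\to0$ as soon as $\log N=o(m)$, which holds since $\log N=O\!\left(2^{2n-2t}\sqrt n\,\log n\right)$ while, by hypothesis, $m\ge n\,2^{2n-2t+2}\log n$. Hence the number of non-trivial $t$-intersecting families of $m$ permutations is $o\!\left(\binom{(n-t)!}{m}\right)$, which completes the proof.

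The step I expect to be the main obstacle is the bound on $N$, and specifically pushing the Bollob\'as set-pairs argument through for $t\ge2$: when $t=1$ the diagonal matchings $\si_j$ and $\pi_j$ are genuinely disjoint and one obtains $r\le\binom{2n}{n}$ at once, whereas for $t\ge2$ one has to work with the $(t-1)$-truncated version of the inequality and then verify that the resulting bound on $N$ is indeed $2^{o(m)}$ over the whole stated range of $m$. Everything else is a routine comparison of binomial coefficients; and if one additionally wants the asymptotically exact count $\left(\binom{n}{t}^2 t!+o(1)\right)\binom{(n-t)!}{m}$ of $t$-intersecting families of $m$ permutations, in parallel with Theorem~\ref{thm-perm-all}, one also needs to check that the size-$m$ subfamilies common to two distinct maximum trivial families contribute negligibly, which holds because two such families share far fewer than $(n-t)!$ permutations.
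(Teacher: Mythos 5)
Your proof is correct and takes essentially the same route as the paper: you bound the number of maximal $t$-intersecting families via a small generating set and the skew/$t$-intersecting Bollob\'as set-pairs inequality (the paper cites F\"uredi's version, Theorem~\ref{thm-tsetpairs}, to get the same $\binom{2n-2t+2}{n-t+1}$ bound, with a factor $\tfrac12$ improvement from a symmetrized pair system that is immaterial here), you use Ellis's stability theorem to bound non-trivial families by $(1-c)(n-t)!$ with $c = 1/e - o(1)$, and you compare $N(1-c)^m$ against $1$ exactly as in the paper's Lemma~\ref{lem-cond} and its application. The paper simply packages your counting step as a general lemma; the quantitative inputs and the final comparison are identical.
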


Secondly we obtain the following sparse extension of the result of Ellis, Friedgut and Pilpel~\cite{efp}.  Let $(S_n)_p$ denote the $p$-random subset of $S_n$, where each permutation in $S_n$ is included independently with probability $p$.  Provided $p$ is not too small, we show that with high probability the largest $t$-intersecting family in $(S_n)_p$ is trivial.  Note that the Ellis--Friedgut--Pilpel theorem corresponds to the case $p=1$.

\begin{theorem} \label{thm-perm-random}
For fixed $t \ge 1$, $n$ sufficiently large and $p = p(n) \ge \frac{800 n 2^{2n - 2t} \log n}{(n-t)!}$, with high probability every largest $t$-intersecting family in $\left(S_n \right)_p$ is trivial.
\end{theorem}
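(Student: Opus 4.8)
The plan is to reduce Theorem~\ref{thm-perm-random} to two facts established elsewhere in the paper: our upper bound on the number of maximal $t$-intersecting families of $S_n$, obtained from the Bollob\'as set-pairs inequality, and a stability theorem for $t$-intersecting families in $S_n$, which says that every non-trivial $t$-intersecting $\cF\subseteq S_n$ satisfies $|\cF|\le (1-\ep_0)(n-t)!$ for some constant $\ep_0=\ep_0(t)>0$ once $n$ is large. The only other ingredient is a standard Chernoff tail bound.

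First I would observe that any $t$-intersecting subfamily of $(S_n)_p$ is in particular a $t$-intersecting subfamily of $S_n$, hence contained in a maximal one; moreover a maximal trivial family, of size $(n-t)!$, is itself $t$-intersecting. Thus, if $\cF'$ is a \emph{largest} $t$-intersecting subfamily of $(S_n)_p$, then on the one hand $\cF'=\cF\cap (S_n)_p$ for some maximal $t$-intersecting $\cF\subseteq S_n$ (take any maximal $\cF\supseteq\cF'$; then $\cF'\subseteq\cF\cap (S_n)_p$ and $\cF\cap (S_n)_p$ is $t$-intersecting, so equality holds by maximality of $\cF'$), and on the other hand $|\cF'|\ge\max_{\cF_0}|\cF_0\cap (S_n)_p|$, the maximum ranging over maximal trivial families. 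Hence it suffices to show that w.h.p.
\[
\max_{\cF}\, |\cF\cap (S_n)_p|\ <\ \max_{\cF_0}\, |\cF_0\cap (S_n)_p|,
\]
with $\cF$ ranging over maximal non-trivial and $\cF_0$ over maximal trivial $t$-intersecting families of $S_n$: this forces the $\cF$ realizing $\cF'$ to be trivial, and hence $\cF'$ itself to be trivial.

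For the right-hand side: there are only $\binom{n}{t}^2 t!=\mathrm{poly}(n)$ maximal trivial families, each of size $(n-t)!$, so each $|\cF_0\cap (S_n)_p|$ is binomial with mean $p(n-t)!=800\,n2^{2n-2t}\log n\to\infty$; the Chernoff lower tail and a union bound over these polynomially many families give that w.h.p.\ the right-hand maximum is at least $(1-\tfrac14\ep_0)\,p(n-t)!$. For the left-hand side: if $\cF$ is maximal and non-trivial, then $|\cF|\le (1-\ep_0)(n-t)!$ by stability, so $|\cF\cap (S_n)_p|\sim\mathrm{Bin}(|\cF|,p)$ has mean at most $(1-\ep_0)p(n-t)!$, whence the Chernoff upper tail gives $\Pr\big[|\cF\cap (S_n)_p|\ge (1-\tfrac12\ep_0)p(n-t)!\big]\le \exp(-\de\,p(n-t)!)$ for some constant $\de=\de(\ep_0)>0$. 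Taking a union bound over all maximal non-trivial families --- of which, by the first input, there are at most $2^{O(n2^{2n-2t}\log n)}$ --- bounds the total failure probability by $2^{O(n2^{2n-2t}\log n)}\cdot\exp(-800\,\de\,n2^{2n-2t}\log n)$, and the constant $800$ is chosen large enough (relative to $\de$ and the implicit constant in the count) that this is $o(1)$. Since $1-\tfrac12\ep_0<1-\tfrac14\ep_0$, combining the two estimates yields the displayed inequality w.h.p., and hence the theorem.

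The crux is the first input. A priori there could be as many as $2^{n!}=2^{2^{\Th(n\log n)}}$ maximal $t$-intersecting families, and the argument only has room because the Bollob\'as set-pairs inequality, applied to the minimal generating sets of maximal families, cuts this down to $2^{2^{\Th(n)}}$ --- that is, it makes $\log(\text{number of maximal families})$ comparable to, rather than vastly larger than, $p(n-t)!$ at the threshold in the statement. This is exactly the step shared with the proofs of Theorems~\ref{thm-perm-all} and~\ref{thm-perm-fixed}; granting it, the remainder is a routine balancing of the Chernoff exponent $\de\,p(n-t)!$ against this count, together with the fact that the stability gap $\ep_0(t)$ is a positive constant for each fixed $t$.
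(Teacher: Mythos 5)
Your proposal is correct and takes essentially the same approach as the paper: fix concentration thresholds separating the trivial mean $p(n-t)!$ from the non-trivial mean $\le (1-\ep_0)p(n-t)!$ (the paper uses $(1\pm\tfrac{1}{10})$ multipliers with Ellis's stability bound $N_1=(1-1/e+o(1))(n-t)!$ playing the role of your $(1-\ep_0)(n-t)!$), apply Hoeffding/Chernoff to each maximal family, and union-bound over the at most $n^{n2^{2n-2t+1}}$ maximal families from Proposition~\ref{prop-perm-max}, with $800$ in the hypothesis chosen to beat this count.
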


\subsection{Hypergraphs}\label{subsec-hyp}
For $k\ge 2$ and $1\le t<k$, a $k$-uniform hypergraph $\cH$ on vertex set $[n]$ is \emph{$t$-intersecting} if every pair of edges shares at least $t$ vertices. A family is \emph{trivial} if every edge in $\cH$ contains a fixed set of $t$ vertices. The classic Erd\H{o}s--Ko--Rado theorem~\cite{ekr61} and the work of Frankl~\cite{f76} and Wilson~\cite{w84} show that when $n \ge (t+1)(k-t+1)$, the largest $t$-intersecting $k$-uniform hypergraphs have $\binom{n-t}{k-t}$ edges, a bound attained (not uniquely) by trivial $t$-intersecting families.  We show that just beyond this bound, the trivial $t$-intersecting hypergraphs are typical.

\begin{theorem}\label{thm-hyp-count}
Let $n$, $k = k(n) \ge 3$ and $t = t(n) \ge 1$ be integers such that $n \ge (t+1)(k-t+1) + \eta_{k,t}$, where
\[ \eta_{k,t} = \begin{cases}
k + 8 \ln k &\mbox{for } t = 1, \\
12 \ln k &\mbox{for } t = 2 \mbox{ and } k-t \ge 3, \\
1 &\mbox{for } t \ge 3 \mbox{ and } k-t \ge 3, \\
31 &\mbox{for } t \ge 2 \mbox{ and } k-t = 2, \\
18 k &\mbox{for } t \ge 2 \mbox{ and } k - t = 1.
\end{cases}\]
Almost all $t$-intersecting $k$-uniform hypergraphs on $[n]$ are trivial, and the number of $t$-intersectng $k$-uniform hypergraphs is $\left( \binom{n}{t} + o(1) \right) 2^{\binom{n-t}{k-t}}$.
\end{theorem}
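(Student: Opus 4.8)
The plan is to prove matching lower and upper bounds on the number of $t$-intersecting $k$-uniform hypergraphs on $[n]$, with the trivial families contributing the main term $\binom{n}{t}2^{\binom{n-t}{k-t}}$ and everything else being negligible; the ``almost all are trivial'' statement is then immediate, since the non-trivial families are a $o(1)$ fraction. For the lower bound I would count trivial families directly: each of the $\binom{n}{t}$ choices of a $t$-set $T$ gives $2^{\binom{n-t}{k-t}}$ subfamilies of the full star $\{e\in\binom{[n]}{k}:T\subseteq e\}$, while a hypergraph trivial with respect to two distinct $t$-sets $T\neq T'$ has every edge containing $T\cup T'$, of which there are at most $2^{\binom{n-t-1}{k-t-1}}$. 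Since $\binom{n-t}{k-t}-\binom{n-t-1}{k-t-1}=\binom{n-t-1}{k-t}$ dominates $\log\binom{n}{t}$ throughout the stated range, inclusion--exclusion gives at least $\big(\binom{n}{t}-o(1)\big)2^{\binom{n-t}{k-t}}$ trivial hypergraphs.

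For the upper bound, every $t$-intersecting family sits inside a maximal one, so the number of such families is at most $\sum_{\cH\text{ maximal}}2^{|\cH|}$ (overcounting is harmless here). One checks in the stated range that each full star is itself maximal (an edge $e$ with $T\not\subseteq e$ is blocked by an edge containing $T$ but disjoint from $e\setminus T$), so the $\binom{n}{t}$ trivial maximal families contribute exactly $\binom{n}{t}2^{\binom{n-t}{k-t}}$, the main term, and it remains to bound the contribution of the non-trivial maximal families. The first ingredient is a quantitative stability fact: every non-trivial maximal $t$-intersecting family has at most $\binom{n-t}{k-t}-D$ edges for an explicit $D=D(n,k,t)$ -- for $t=1$ this is the Hilton--Milner theorem with $D=\binom{n-k-1}{k-1}-1$, and for $t\ge 2$ it follows from the Frankl/Ahlswede--Khachatrian description of the largest non-($t$-star) $t$-intersecting families (with $\{A:|A\cap[t+2]|\ge t+1\}$, or the small explicit extremal configurations when $k-t$ is $1$ or $2$, as the relevant comparison). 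This is exactly where the hypothesis $n\ge(t+1)(k-t+1)+\eta_{k,t}$ is used: near the Frankl--Wilson threshold several near-extremal families share essentially the maximum size, so one must move a controlled distance above it to get a usable gap $D$, and the regime-dependent value of $\eta_{k,t}$ -- hence the case split on $t$ and on whether $k-t$ is $1$, $2$, or larger -- records precisely how much room the applicable extremal result requires.

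The second, decisive ingredient is a bound on the \emph{number} of maximal $t$-intersecting $k$-uniform hypergraphs on $[n]$: I would use the Bollob\'as set-pairs inequality to show it is $2^{o(D)}$. The idea is to attach to each maximal family $\cH$ a short certificate built from a bounded number of its edges -- exploiting that a maximal family satisfies $\cH=\{e:|e\cap f|\ge t\text{ for all }f\in\cH\}$ and has $t$-covering number at most $\binom{k}{t}$, so a minimal $t$-cover plus a few witnessing edges already pins down much of the structure -- and then encoding certificates of distinct families as disjoint set-pairs with a forced cross-intersection, so that Bollob\'as caps their number by a small binomial coefficient. (In the boundary cases $k-t\in\{1,2\}$ the non-trivial maximal families live on a bounded ground set, so here one can count them essentially by hand.) Combining the two ingredients, the non-trivial maximal families contribute at most $2^{o(D)}\cdot 2^{\binom{n-t}{k-t}-D}=o\big(2^{\binom{n-t}{k-t}}\big)$; adding this to the main term $\binom{n}{t}2^{\binom{n-t}{k-t}}$ and comparing with the lower bound finishes the count.

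The step I expect to be the main obstacle is the Bollob\'as-based bound on the number of maximal families. It must be genuinely of the form $2^{o(D)}$ \emph{uniformly} over all $k,t$ allowed by the hypothesis, where $D$ itself ranges from order $n$ (small $k$) up to doubly exponential in $k$, and where the obvious bound ``number of maximal families $\le$ number of $t$-intersecting families $\sim 2^{\binom{n-t}{k-t}}$'' is far too weak -- so the set-pair encoding has to lose essentially nothing. Making that work, and then verifying that the resulting bound genuinely beats $2^{D}$ for the regime-dependent $D$ coming from the extremal results, is what forces the somewhat ad hoc shape of $\eta_{k,t}$, in particular the linear term $18k$ for $k-t=1$ and the logarithmic corrections ($k+8\ln k$, $12\ln k$, $8\ln k$) in the cases $t\le 2$ or $k-t\le 2$.
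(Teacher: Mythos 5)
Your proposal follows essentially the same route as the paper: bound the number of maximal $t$-intersecting hypergraphs by assigning to each a small generating set and invoking the Bollob\'as set-pairs inequality (in F\"uredi's $t$-intersecting form, via pairs $(F_i,G_i)$ where $G_i$ witnesses non-redundancy of $F_i$ in the generating set), then combine this count with the Hilton--Milner/Ahlswede--Khachatrian stability gap and Bonferroni to isolate $\binom{n}{t}2^{\binom{n-t}{k-t}}$ as the main term. One caveat: your claim that the maximal-family count is $2^{o(D)}$ is an overstatement that the argument does not actually need (and which fails near the boundary, e.g.\ for $k-t=1$ and $n$ close to $20k$, where $\log M$ is a constant fraction of $D$); what is needed and what the paper verifies is only that $\log M + N_1 - N_0 \to -\infty$, and the paper handles the cases $k-t\in\{1,2\}$ with the same machinery and adjusted $\eta_{k,t}$ rather than by a separate hand count.
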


Observe that $\eta_{k,t} = 1$, which we have for most values of $t$ and $k$, is the best possible result, as when $n = (t+1)(k-t+1)$ the largest non-trivial $t$-intersecting hypergraphs are as large as the trivial hypergraphs.  In fact, there are many more of them, and hence for this $n$ almost every $t$-intersecting hypergraph is non-trivial.

However, there is no doubt that the case $t = 1$ is the most natural and interesting to study.  Theorem~\ref{thm-hyp-count} gives the asymptotic number of intersecting hypergraphs when $n \ge 3k + 8 \ln k$.  On the other hand, it is known that the trivial hypergraphs are the largest when $n \ge 2k$, and uniquely so when $n \ge 2k + 1$.  The following theorem, which we prove using spectral methods and the theory of graph containers, provides a slightly weaker result that covers the entire range.

\begin{theorem} \label{thm-hyp-containers}
For $k \ge 3$ and $n \ge 2k+1$, let $I(n,k)$ denote the number of intersecting $k$-uniform hypergraphs on $[n]$.  Then
\[ \log I(n,k) = \left( 1 + o(1) \right) \binom{n-1}{k-1}. \]
\end{theorem}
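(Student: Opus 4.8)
The plan is to translate the problem into counting independent sets in the Kneser graph $K(n,k)$, whose vertices are the $k$-subsets of $[n]$, with two such sets adjacent precisely when they are disjoint; an intersecting $k$-uniform hypergraph on $[n]$ is exactly an independent set of $K(n,k)$. The lower bound $\log I(n,k)\ge\binom{n-1}{k-1}$ is immediate, since each of the $2^{\binom{n-1}{k-1}}$ subhypergraphs of the star $\{A\in\binom{[n]}{k}:1\in A\}$ is intersecting. For the matching upper bound I would split according to how close $n$ is to $2k$, because the argument behaves quite differently in the two ranges.

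\textbf{When $n=2k+1$.} Here $K(2k+1,k)$ is $(k+1)$-regular on $N=\binom{2k+1}{k}$ vertices, and the natural tool is the bound of Kahn and Zhao on the number of independent sets in a regular graph, which gives $I(2k+1,k)\le(2^{k+2}-1)^{N/(2(k+1))}$. Thus $\log I(2k+1,k)\le\tfrac{k+2}{2(k+1)}\binom{2k+1}{k}$, and using $\binom{2k+1}{k}=\tfrac{2k+1}{k}\binom{2k}{k-1}$ together with $\tfrac{(k+2)(2k+1)}{2k(k+1)}=1+O(1/k)$ yields $\log I(2k+1,k)\le(1+o(1))\binom{2k}{k-1}$, as required. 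The point is simply that here the graph is regular of small degree and the regular-graph bound is asymptotically $\tfrac12 N\sim\binom{n-1}{k-1}$.

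\textbf{When $n\ge 2k+2$.} Here I would combine a spectral supersaturation statement with the graph container method. The Kneser graph $K(n,k)$ is $\binom{n-k}{k}$-regular with eigenvalues $(-1)^i\binom{n-k-i}{k-i}$ for $0\le i\le k$; the magnitudes are strictly decreasing in $i$, so its least eigenvalue is $-\binom{n-k-1}{k-1}$ and this also dominates all non-principal eigenvalues in absolute value (in particular the Hoffman ratio bound is tight and equals exactly $\binom{n-1}{k-1}$). Applying the expander mixing lemma in the sharp form carrying the factor $\bigl(1-|U|/N\bigr)$, one gets that for every $\delta>0$ any $U\subseteq\binom{[n]}{k}$ with $|U|\ge(1+\delta)\binom{n-1}{k-1}$ contains at least $\tfrac{\delta}{2}\binom{n-k-1}{k-1}|U|$ disjoint pairs, hence a $k$-set disjoint from at least $\delta\binom{n-k-1}{k-1}$ others in $U$. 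Feeding this into the Kleitman--Winston container algorithm produces a family $\cC$ of containers such that every intersecting family lies in some $C\in\cC$, each $|C|\le(1+\delta)\binom{n-1}{k-1}+\binom{n}{k}\big/\bigl(\delta\binom{n-k-1}{k-1}\bigr)$, and $\log|\cC|\le\bigl(\binom{n}{k}\big/\bigl(\delta\binom{n-k-1}{k-1}\bigr)\bigr)\log\binom{n}{k}$. Since $k\ge3$ and $n\ge2k+2$ force $\binom{n-k-1}{k-1}\ge\binom{n-k-1}{2}=\Omega(n^2)$, while $\binom{n}{k}/\binom{n-1}{k-1}=n/k$ and $\log\binom{n}{k}=O(k\log n)$, choosing $\delta=\delta(n)\to0$ slowly (say $\delta=1/\log n$) makes both error terms $o\bigl(\binom{n-1}{k-1}\bigr)$, and therefore $\log I(n,k)\le\log|\cC|+\max_{C\in\cC}|C|=(1+o(1))\binom{n-1}{k-1}$.

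\textbf{Main obstacle.} The difficulty is entirely concentrated near $n=2k$: the quantity $\binom{n-k-1}{k-1}$ controlling the per-step elimination rate of the container algorithm collapses to $k$ at $n=2k+1$, and there $\binom{n}{k}\big/\bigl(\delta\binom{n-k-1}{k-1}\bigr)$ exceeds $\binom{n-1}{k-1}/\log\binom{n}{k}$ for every admissible $\delta$, so \emph{no} container-type argument can beat the trivial bound $2^N$ by the required margin. This is precisely why a genuinely different tool (the regular-graph independent-set bound) is needed in the first regime, and why Theorem~\ref{thm-hyp-containers} asserts only the $\log$-asymptotics rather than the exact enumeration of Theorem~\ref{thm-hyp-count}. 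The remaining work is routine but needs care: verifying the eigenvalue ordering of $K(n,k)$, and tracking the container and fingerprint sizes so that the estimates are uniform over the whole range $3\le k\le(n-1)/2$.
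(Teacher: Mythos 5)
Your proposal is correct, and the split it makes is mathematically sound. The $n\ge 2k+2$ branch is essentially the paper's argument: Kneser graph, eigenvalue $-\binom{n-k-1}{k-1}$, expander mixing lemma for supersaturation, then containers. The $n=2k+1$ branch is a genuinely different and arguably cleaner route: since $KG(2k+1,k)$ is $(k+1)$-regular on $N=\binom{2k+1}{k}$ vertices, the Kahn--Zhao bound $i(G)\le(2^{d+1}-1)^{N/(2d)}$ gives directly $\log I(2k+1,k)\le\frac{k+2}{2(k+1)}\binom{2k+1}{k}=\frac{(k+2)(2k+1)}{2k(k+1)}\binom{2k}{k-1}=\bigl(1+O(1/k)\bigr)\binom{n-1}{k-1}$, with no spectral supersaturation or container bookkeeping whatsoever. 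That is a nice observation and worth keeping.

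However, your ``Main obstacle'' paragraph is mistaken: the container argument does \emph{not} break down at $n=2k+1$, and the paper in fact applies it uniformly over the whole range $n\ge 2k+1$. The flaw in your analysis is the estimate $\log|\cC|\le\ell\log\binom{n}{k}$ together with an over-large choice of fingerprint $\ell$. Two sharper facts close the gap. First, condition~\eqref{eqn-Rbound} of Theorem~\ref{thm-containers} only requires $\ell\ge\frac{1}{\beta}\ln(N/R)$, and at $n=2k+1$ the factor $\ln(N/R)=\ln\frac{n}{(1+\ep)k}$ is a bounded constant rather than $\Theta(\log N)=\Theta(k)$, so one may take $\ell=\Theta_\ep\bigl(\binom{n-1}{k-1}/k\bigr)$. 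Second, the number of containers is $\binom{N}{\ell}$, and $\log\binom{N}{\ell}\le\ell\log(Ne/\ell)$; at $n=2k+1$ with this $\ell$ one has $N/\ell=\Theta_\ep(k)$, so $\log(Ne/\ell)=O(\log k)$, not $\Theta(k)$ as your proxy $\log\binom{n}{k}$ would suggest. Combining, $\log\binom{N}{\ell}=O_\ep\bigl(\tfrac{\log k}{k}\binom{n-1}{k-1}\bigr)=o\bigl(\binom{n-1}{k-1}\bigr)$, and the container upper bound goes through at $n=2k+1$ after all. So the case split is a valid alternative, but it is not forced; the obstacle you describe is an artefact of the coarser fingerprint and container-count estimates you used.
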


Similarly to permutations, we are able to obtain a sparse version of the Erd\H{o}s--Ko--Rado theorem\footnote{We could prove an analogue of Theorem~\ref{thm-perm-fixed}, but have decided to omit the very similar result.}.  Let $\cH^{k}(n,p)$ denote the $p$-random $k$-uniform hypergraph on $[n]$, in which every edge in ${[n]\choose k}$ is included independently with probability $p$. Balogh, Bohman and Mubayi~\cite{bbm} initiated the study of intersecting hypergraphs in the sparse random setting. Among other results, they determined the size of the largest intersecting subhypergraph of $\cH^k(n,p)$ when $k< n^{1/2-\ep}$. Recently, Gauy, H\`{a}n and Oliveira~\cite{RSA} determined the asymptotic size of the largest intersecting family for all $k$ and almost all $p$.  Hamm and Kahn~\cite{kh} obtained an exact result for $k< (\frac{1}{2}-\ep)(n\log n)^{1/2}$, for any constant $\ep$, determining for which $p$ we have with high probability that every largest intersecting subhypergraph of $\cH^k(n,p)$ is trivial.  We prove, provided $p$ is not too small, that the same conclusion holds even for $k$ as large as $n/4$.\footnote{A similar statement can be proved for $t$-intersecting families; we leave the details to the readers.}  We remark that Hamm and Kahn~\cite{hk} also studied the case $n=2k+1$ and $p=1-c$ for some constant $c > 0$.

\begin{theorem} \label{thm-hyp-random}
For $3 \leq k \leq \frac{n}{4}$, if
\begin{equation} \label{eqn-prob-bound}
p \ge p_0(n,k) = \frac{9 n \log \left(\frac{ne}{k} \right) {2k \choose k}{n \choose k}}{{n-k \choose k}^2},
\end{equation}
then with high probability every largest intersecting subhypergraph of $\cH^k(n,p)$ is trivial.
\end{theorem}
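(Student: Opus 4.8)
The plan is to transfer the Erd\H{o}s--Ko--Rado extremal result to the sparse random hypergraph $\cH^k(n,p)$ by controlling the number of maximal intersecting $k$-uniform hypergraphs and then applying a union bound, combined with a stability statement. First I would recall (or derive from the Bollob\'as set-pairs inequality, as the paper does elsewhere) a bound on the number $M(n,k)$ of \emph{maximal} intersecting $k$-uniform hypergraphs on $[n]$; a maximal intersecting family is determined by its minimal non-edges, and the set-pairs inequality should give something like $M(n,k) \le 2^{O\left(\binom{2k}{k}\log\binom{n}{k}\right)}$, i.e. much smaller than $2^{\binom{n-1}{k-1}}$. Since every intersecting subhypergraph of $\cH^k(n,p)$ extends to a maximal one, it suffices to show that with high probability no \emph{non-trivial} maximal intersecting family $\cM$ has $|\cM \cap \cH^k(n,p)|$ as large as the largest \emph{trivial} intersecting family inside $\cH^k(n,p)$.

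The second step is the two-sided concentration argument. For a fixed trivial family $\cT_v = \{A \in \binom{[n]}{k} : v \in A\}$, the quantity $|\cT_v \cap \cH^k(n,p)|$ is a sum of $\binom{n-1}{k-1}$ independent Bernoulli$(p)$ variables, so by Chernoff it is at least $(1-o(1))p\binom{n-1}{k-1}$ with probability $1 - e^{-\Omega(p\binom{n-1}{k-1})}$; taking the best of $n$ choices of $v$ only helps. For a non-trivial maximal intersecting $\cM$, the key is an extremal input: by the stability version of EKR (valid since $n \ge 2k+1$), a non-trivial maximal intersecting family has size at most $|\cM| \le \binom{n-1}{k-1} - \binom{n-k-1}{k-1} + \ldots$, in any case bounded away from $\binom{n-1}{k-1}$ by a multiplicative factor $1 - \gamma$ for some $\gamma = \gamma(n,k) > 0$ — here the Hilton--Milner theorem, or its refinements, gives the precise bound, and for $k \le n/4$ the gap $\gamma$ is bounded below by a constant. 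Then $|\cM \cap \cH^k(n,p)| \sim \text{Bin}(|\cM|, p)$ has mean at most $(1-\gamma)p\binom{n-1}{k-1}$, and Chernoff's upper tail gives $\Pr[\,|\cM \cap \cH^k(n,p)| \ge (1-\gamma/2)p\binom{n-1}{k-1}\,] \le e^{-c\gamma^2 p \binom{n-1}{k-1}}$.

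The third step is the union bound: we need
\[ M(n,k) \cdot e^{-c\gamma^2 p \binom{n-1}{k-1}} = o(1), \]
i.e. $p \gtrsim \frac{\log M(n,k)}{\gamma^2 \binom{n-1}{k-1}}$. Plugging in $\log M(n,k) = O\left(\binom{2k}{k}\log\binom{n}{k}\right)$ and $\binom{n}{k} \le \binom{n-1}{k-1}\cdot\frac{n}{n-k} \le \frac{4}{3}\binom{n-1}{k-1}$ while also writing $\binom{n-1}{k-1}$ in terms of $\binom{n-k}{k}$ (the denominator in~\eqref{eqn-prob-bound}), one should recover exactly the bound
\[ p \ge p_0(n,k) = \frac{9 n \log\left(\frac{ne}{k}\right)\binom{2k}{k}\binom{n}{k}}{\binom{n-k}{k}^2}, \]
after absorbing constants; the factor $\log(ne/k)$ is the per-coordinate contribution to $\log\binom{n}{k}$, the $\binom{2k}{k}$ comes from the set-pairs bound on the number of minimal non-edges, and the ratio $\binom{n}{k}/\binom{n-k}{k}^2$ reconciles the $\binom{n-1}{k-1}$ in the exponent with the normalization. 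I would also need to check the trivial-family lower tail survives: $p_0(n,k)\binom{n-1}{k-1} \to \infty$, which holds since $\binom{2k}{k}/\binom{n-k}{k} \ge$ a positive power when $k \le n/4$... actually one must be slightly careful here, so I would verify $p_0(n,k) \binom{n-1}{k-1} = \Omega(n \log(ne/k))$ directly from the formula, which makes both Chernoff bounds effective.

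The main obstacle will be pinning down the right quantitative stability statement for non-trivial maximal intersecting families when $k$ is as large as $n/4$ — one needs the multiplicative gap $\gamma$ between $\max_{\text{non-trivial }\cM}|\cM|$ and $\binom{n-1}{k-1}$ to be bounded below by an absolute constant (not decaying with $k/n$), which the Hilton--Milner bound $\binom{n-1}{k-1} - \binom{n-k-1}{k-1} + 1$ does deliver in this range since $\binom{n-k-1}{k-1}/\binom{n-1}{k-1} = \prod_{i=0}^{k-2}\frac{n-k-1-i}{n-1-i}$ stays bounded away from $1$; and then ensuring that the loss from the $\binom{2k}{k}$-sized union bound is still dominated by $\gamma^2 p \binom{n-1}{k-1}$ for $p$ at the claimed threshold. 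Everything else is Chernoff bookkeeping.
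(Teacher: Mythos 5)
Your proposal follows essentially the same route as the paper: bound the number of maximal intersecting families by assigning each one a small generating set via the Bollob\'as set-pairs inequality (giving $M \le \binom{n}{k}^{\binom{2k-1}{k-1}}$), invoke Hilton--Milner as the stability input, apply Hoeffding/Chernoff to the restriction of each maximal family to $\cH^k(n,p)$, and take a union bound. The paper works with an additive gap $\tau = p\binom{n-k-1}{k-1}/3$ rather than your multiplicative $\gamma$, but the two are interchangeable.

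There is one quantitative slip worth flagging. You assert that ``for $k \le n/4$ the gap $\gamma$ is bounded below by a constant.'' That is not true: writing $\gamma \approx \binom{n-k-1}{k-1}/\binom{n-1}{k-1} = \prod_{i=0}^{k-2}\frac{n-k-1-i}{n-1-i}$, when $k$ is a constant fraction of $n$ (say $k = n/4$) each factor is roughly $3/4$, so $\gamma$ decays exponentially in $k$. What you wrote is that this ratio ``stays bounded away from $1$,'' which is correct but is the wrong direction; the issue for a multiplicative Chernoff bound is that $\gamma$ is not bounded away from $0$. This does not invalidate the approach: the threshold $p \gtrsim \log M / (\gamma^2 \binom{n-1}{k-1})$ that you derive is exactly what the paper's formula $p_0(n,k) = 9n\log(ne/k)\binom{2k}{k}\binom{n}{k}/\binom{n-k}{k}^2$ encodes, with the $\binom{n-k}{k}^2$ in the denominator precisely absorbing the exponential decay of $\gamma$. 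So the argument goes through once you drop the incorrect claim that $\gamma$ is a constant and simply carry it symbolically, as you in fact do in the union-bound step. The only thing to verify (which you noted) is that $p_0 \binom{n-1}{k-1} \to \infty$ so the lower tail bound for the trivial family is effective; this follows from the same computation.
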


Observe that the lower bound on $p$ in~\eqref{eqn-prob-bound} is at most $9 n \log \left( \frac{ne}{k} \right) \left( \frac{2kn}{(n-k)^2} \right)^k$, and is thus exponentially small with respect to $k \log \left( \frac{n}{k} \right)$.

\subsection{Vector spaces}\label{subsec-vs}
Let $V$ be an $n$-dimensional vector space over a finite field $\bF_q$.  The number of $k$-dimensional subspaces in $V$ is given by the Gaussian binomial coefficient
\[{n \brack k}_q := \prod_{i=0}^{k-1}\frac{q^{n-i}-1}{q^{k-i}-1}.\]

A family $\cF$ of $k$-dimensional subspaces of $V$ is \emph{intersecting} if dim$(F_1 \cap F_2) \geq 1$ for all pairs of subspaces $F_1, F_2 \in \cF$. Hsieh \cite{h75} proved an Erd\H os--Ko--Rado-type theorem for vector spaces, showing that for $n \geq 2k + 1$, any intersecting family of $k$-dimensional subspaces of $V$ has size at most ${n-1\brack k-1}_q$. Furthermore, the only constructions achieving the maximum size are \emph{trivial}, consisting of all $k$-dimensional subspaces through a given $1$-dimensional subspace. The results we obtain for permutations and hypergraphs can be extended to vector spaces as well, and we determine here the typical structure of intersecting families of subspaces\footnote{Similarly to the permutations, sparse extensions can be proved, and we leave the details to the readers.}.

\begin{theorem}\label{thm-vec}
If $k \ge 2$, and either $q = 2$ and $n \ge 2k + 2$ or $q \ge 3$ and $n \geq 2k+1$, almost all intersecting families of $k$-dimensional subspaces of $\bF_q^n$ are trivial, and there are $\left( {n \brack 1}_q + o(1) \right) 2^{{n-1 \brack k-1}_q}$ intersecting families.
\end{theorem}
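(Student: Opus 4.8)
The plan is to mirror the arguments used for permutations and hypergraphs, splitting the count of intersecting families of $k$-dimensional subspaces of $\bF_q^n$ into the trivial families, which will supply the main term, and the non-trivial families, which we will show form a lower-order term; the assertions that almost all such families are trivial and that there are $\left( {n \brack 1}_q + o(1) \right) 2^{{n-1 \brack k-1}_q}$ of them both follow once this is done. For the trivial count, fix a $1$-dimensional subspace $L$: the $k$-dimensional subspaces through $L$ correspond to the $(k-1)$-dimensional subspaces of $\bF_q^n/L$, so there are ${n-1 \brack k-1}_q$ of them and hence $2^{{n-1 \brack k-1}_q}$ subfamilies, giving the upper bound ${n \brack 1}_q\, 2^{{n-1 \brack k-1}_q}$. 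The overcounting is controlled by inclusion--exclusion: a family counted for two distinct lines has all of its members through their common $2$-dimensional span, and there are only ${n-2 \brack k-2}_q$ such subspaces, so the double-counted contribution is at most $\binom{{n \brack 1}_q}{2} 2^{{n-2 \brack k-2}_q}$; since ${n-1 \brack k-1}_q - {n-2 \brack k-2}_q$ grows much faster than $\log {n \brack 1}_q$ in the stated range, a Bonferroni argument yields $\left( {n \brack 1}_q + o(1) \right) 2^{{n-1 \brack k-1}_q}$ trivial families (the $O\!\left( {n \brack k}_q \right)$ degenerate families of size at most one are absorbed into the error term).

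It remains to show there are $o\!\left( 2^{{n-1 \brack k-1}_q} \right)$ non-trivial intersecting families. Every intersecting family extends to a maximal one, and a maximal intersecting family that is trivial must be one of the ${n \brack 1}_q$ full families through a fixed line; hence every non-trivial intersecting family lies inside a maximal \emph{non-trivial} intersecting family $\cF$, and the number of non-trivial intersecting families is at most $\sum_{\cF} 2^{\card{\cF}}$ over such $\cF$. I would bound this using two ingredients. First, the number of maximal intersecting families is controlled via the subspace analogue of the Bollob\'as set-pairs inequality (Lov\'asz, Frankl): a maximal intersecting $\cF$ is recovered from any \emph{blocking subfamily} $\cG \subseteq \cF$, that is, one with $\cF = \{ F : \dim(F \cap G) \ge 1 \text{ for all } G \in \cG \}$, and such a $\cG$ exists by maximality. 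Taking $\cG$ minimal, each $G \in \cG$ has a witness $H_G \notin \cF$ with $\dim(G \cap H_G) = 0$ but $\dim(G' \cap H_G) \ge 1$ for all other $G' \in \cG$; the pairs $(G, H_G)$ of $k$-dimensional subspaces then fit the hypotheses of the set-pairs inequality, so $\card{\cG} \le {2k \brack k}_q$ and the number of maximal intersecting families is at most $\binom{{n \brack k}_q}{\le {2k \brack k}_q}$ (a probabilistic covering argument should in fact give a considerably smaller blocking subfamily for the families that matter). Second, the Hilton--Milner theorem for vector spaces (Blokhuis, Brouwer, Chowdhury, Frankl, Mubayi, Patk\'os, Szab\'o) gives that a non-trivial intersecting family of $k$-dimensional subspaces has size at most ${n-1 \brack k-1}_q - \Delta$ with $\Delta = q^{k(k-1)} {n-k-1 \brack k-1}_q - 1 > 0$.

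Putting these together, the number of non-trivial intersecting families is at most $\binom{{n \brack k}_q}{\le {2k \brack k}_q} \cdot 2^{{n-1 \brack k-1}_q - \Delta}$, so the task reduces to verifying $\log \binom{{n \brack k}_q}{\le {2k \brack k}_q} = o(\Delta)$. I expect this quantitative comparison to be the main obstacle. When $n$ is comfortably larger than $2k$ it is immediate, because $\Delta$ is then exponentially larger (in $k$) than the crude bound on the number of maximal families; but near the bottom of the range, where $\Delta$ is only a constant ($q$-dependent) fraction of ${n-1 \brack k-1}_q$, the crude Bollob\'as bound no longer suffices on its own, and one must bring in a \emph{stability} strengthening of Hilton--Milner --- a non-trivial intersecting family of size close to ${n-1 \brack k-1}_q$ is structurally close to a trivial family, and can therefore be enumerated far more efficiently, essentially by its small symmetric difference with a full family through a line --- so that the near-extremal maximal families and the rest are counted by separate, complementary arguments. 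It is precisely how tightly these estimates must fit that dictates the hypotheses on $(q,n,k)$, and in particular forces the exclusion of the case $q = 2$, $n = 2k+1$, where the bounds are too close to separate. The remaining work --- asymptotics of Gaussian binomial coefficients and the exterior-algebra proof of the set-pairs inequality over $\bF_q$ --- is routine.
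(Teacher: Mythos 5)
Your plan matches the paper's: bound the maximal intersecting families via a subspace Bollob\'as set-pairs argument, combine with the Hilton--Milner theorem for vector spaces to control the non-trivial families, and use Bonferroni for the trivial count. But there is a genuine error in the set-pairs step that misleads you for the rest of the argument. Lov\'asz's vector-space version of the Bollob\'as inequality bounds the number of cross-intersecting pairs by the \emph{ordinary} binomial coefficient $\binom{a+b}{a}$ (its exterior-algebra proof places the wedges $\hat u_i$ in $\Lambda^a W_0$ for a fixed $(a+b)$-dimensional $W_0$, and $\dim_{\bF_q}\Lambda^a W_0 = \binom{a+b}{a}$), so a minimal generating set of a maximal intersecting family has size at most $\binom{2k}{k}$ --- in fact $\binom{2k-1}{k-1}$ after the doubling trick of Section~\ref{sec-meth-max} --- \emph{not} the Gaussian binomial ${2k \brack k}_q$ you wrote. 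This matters: ${2k \brack k}_q$ is of order $q^{k^2}$, which near $n = 2k+1$ swamps the Hilton--Milner deficit $\Delta = q^{k(k-1)}{n-k-1 \brack k-1}_q - q^k \ge q^{(n-k)(k-1)} - q^k \approx q^{k^2-1}$, and it is exactly this spurious failure that pushes you in your last paragraph toward a stability-strengthened Hilton--Milner and a near-extremal versus far-from-extremal case split. None of that is needed. With the correct bound, $\log M \le \binom{2k-1}{k-1}\log{n \brack k}_q \le 4^k k(n-k)\log(2q)$, which is dwarfed by $\Delta$ throughout the stated range; a short check of $k=2$ and $k\ge 3$ separately, as in the paper, gives $\log M + N_1 - N_0 \to -\infty$ directly. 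Finally, the exclusion of $q=2$, $n=2k+1$ is not, as you conjecture, a tightness artefact of the counting; it is inherited verbatim from the hypotheses of the Blokhuis--Brouwer--Chowdhury--Frankl--Mussche--Patk\'os--Sz\H{o}nyi theorem that supplies $N_1$.
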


\medskip

The rest of paper is organised as follows.  In Section~\ref{sec-meth}, we outline our general method of the proofs, using intersecting hypergraphs as an illustrative example.  The subsequent sections contain the details needed in each particular setting: Section~\ref{sec-perm} deals with permutations, Section~\ref{sec-hyp} with hypergraphs, with Theorem~\ref{thm-hyp-containers} proven in Section~\ref{sec-containers}, and Section~\ref{sec-vs} contains the proof for our vector space result.  In Section~\ref{sec-cr}, we present some concluding remarks and open questions.

Our notation is standard.  We denote by $[n]$ the first $n$ positive integers, and by $[a,b]$ the integers between $a$ and $b$.  Given a set $X$, a $k$-subset is a subset of $X$ of size $k$, $\cP(X)$ represents the set of all subsets of $X$, and $\binom{X}{k}$ the set of all $k$-subsets.  Recall that we use $\log$ for the binary logarithm and $\ln$ for the natural logarithm.

\section{The general method}\label{sec-meth}

Our proofs consist of two stages.  We first obtain strong bounds on the number of maximal intersecting families.  An intersecting family is said to be \emph{maximal} if it is intersecting and is not contained in a larger intersecting family.  Given these bounds, we then use known stability results to bound the number of non-trivial families.  In this section we outline the ideas behind these steps, using intersecting $k$-uniform hypergraphs as a running example.  For clarity, we omit any involved calculations in this section; they shall be carried out in greater generality in Section~\ref{sec-hyp}.

\subsection{Maximal intersecting families} \label{sec-meth-max}

Given a family of sets $\cF$, we denote by $\cI(\cF) = \left\{ G \in \binom{[n]}{k} : \forall F \in \cF, \; G \cap F \neq \emptyset \right\}$ the family of all sets intersecting every set in $\cF$.  Note that $\cF$ forms an intersecting family if and only if $\cF \subset \cI(\cF)$, while $\cF$ is maximal if and only if $\cF = \cI(\cF)$.  Given a maximal intersecting family, we call $\cG \subset \cF$ a \emph{generating set} if $\cF = \cI(\cG)$.

Let $\cF_0 = \{F_1, F_2, \hdots, F_s \} \subset \cF$ be a minimal generating set of $\cF$.  Observe that, by the minimality of $\cF_0$, we have $\cF \subsetneq \cI \left(\cF_0 \setminus \{F_i \} \right)$ for each $1 \le i \le s$.  Hence for each $i$ we can find some set $G_i \in \cI \left( \cF_0 \setminus \{F_i\} \right) \setminus \cF$.  Since $G_i \in \cI \left(\cF_0 \setminus \{F_i\} \right)$, we have $G_i \cap F_j \neq \emptyset$ for all $i \neq j$.  Moreover, since $G_i \notin \cF = \cI(\cF_0)$, we must further have $G_i \cap F_i = \emptyset$.  Given these conditions, we may now apply Frankl's skew version~\cite{f82} of the celebrated Bollob\'as set-pairs inequality~\cite{b65} to bound the size of $\cF_0$.

\begin{theorem}[Frankl] \label{thm-setpairs}
Let $A_1, \hdots, A_m$ be sets of size $a$ and $B_1, \hdots, B_m$ be sets of size $b$ such that $A_i \cap B_i = \emptyset$ and $A_i \cap B_j \neq \emptyset$ for every $1 \le i < j \le m$.  Then $m \le \binom{a + b}{a}$.
\end{theorem}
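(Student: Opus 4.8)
The plan is to prove this via Lov\'asz's exterior algebra method, which is the natural tool here: the symmetric random-permutation argument for the original Bollob\'as inequality does not suffice, since disjointness of the relevant events would require $A_i \cap B_j \ne \emptyset$ for \emph{all} $i \ne j$, whereas we are only given the skew condition. Let $X = \bigcup_{i=1}^m (A_i \cup B_i)$, a finite set. I would fix vectors $\{v_x : x \in X\} \subset \mathbb{R}^{a+b}$ in general position, meaning every $a+b$ of them are linearly independent; concretely one may take $v_x = (1, t_x, t_x^2, \ldots, t_x^{a+b-1})$ for distinct reals $t_x$, so independence follows from the Vandermonde determinant. For $S \subseteq X$ with $|S| = s \le a+b$, set $v_S = \bigwedge_{x \in S} v_x \in \Lambda^s(\mathbb{R}^{a+b})$, with $S$ in some fixed order (the sign will be irrelevant). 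Two elementary facts: if $S \cap T \ne \emptyset$ then $v_S \wedge v_T = 0$ (a repeated vector); and if $S \cap T = \emptyset$ with $|S| + |T| = a+b$ then $v_S \wedge v_T \ne 0$, since it equals, up to sign, a nonzero multiple of the generator of $\Lambda^{a+b}(\mathbb{R}^{a+b})$ (the determinant of $a+b$ linearly independent vectors).

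The key claim is that $v_{A_1}, \ldots, v_{A_m} \in \Lambda^a(\mathbb{R}^{a+b})$ are linearly independent; since $\dim \Lambda^a(\mathbb{R}^{a+b}) = \binom{a+b}{a}$, this yields $m \le \binom{a+b}{a}$. To prove independence, suppose $\sum_{i=1}^m c_i v_{A_i} = 0$ with not all $c_i$ zero, and let $k$ be the \emph{largest} index with $c_k \ne 0$. Wedging the relation with $v_{B_k} \in \Lambda^b(\mathbb{R}^{a+b})$ gives $\sum_{i \le k} c_i\, v_{A_i} \wedge v_{B_k} = 0$. For $i < k$, the skew hypothesis gives $A_i \cap B_k \ne \emptyset$, so $v_{A_i} \wedge v_{B_k} = 0$; and since $A_k \cap B_k = \emptyset$ with $|A_k| + |B_k| = a+b$, we have $v_{A_k} \wedge v_{B_k} \ne 0$. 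Hence $c_k v_{A_k} \wedge v_{B_k} = 0$ forces $c_k = 0$, a contradiction.

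The single subtlety — and the step I would be most careful about — is the choice of $k$ as the \emph{largest} index with nonzero coefficient rather than the smallest: this is precisely what makes the asymmetric hypothesis ``$A_i \cap B_j \ne \emptyset$ for $i < j$'' enough, and it is the only place where the ordering of the indices is used. Everything else (existence of general-position vectors, the vanishing/non-vanishing of wedge products, the dimension count for $\Lambda^a$) is routine linear algebra and would be stated without much elaboration.
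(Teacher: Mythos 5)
The paper states Theorem~\ref{thm-setpairs} without proof, citing Frankl~\cite{f82}, so there is no in-text argument to compare against. Your proof is the standard exterior-algebra argument for the skew Bollob\'as inequality, and it is correct: general-position vectors exist via the moment curve; $v_S \wedge v_T = 0$ whenever $S \cap T \neq \emptyset$ (a repeated factor) and $v_S \wedge v_T \neq 0$ when $S$ and $T$ are disjoint with $|S| + |T| = a+b$ (general position plus the Vandermonde determinant); and the choice of $k$ as the \emph{largest} index with $c_k \neq 0$ is exactly right, since wedging with $v_{B_k}$ kills the terms with $i < k$ via the skew hypothesis $A_i \cap B_k \neq \emptyset$, the terms with $i > k$ are already zero by maximality of $k$, and the surviving $i = k$ term is nonzero because $A_k \cap B_k = \emptyset$. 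The dimension count $\dim \Lambda^a(\mathbb{R}^{a+b}) = \binom{a+b}{a}$ then gives $m \le \binom{a+b}{a}$. You have also correctly noted why the elementary random-permutation proof of the symmetric Bollob\'as inequality does not transfer to the one-sided hypothesis, which is the reason the linear-algebraic method is needed here.
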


Given our $k$-sets $\{F_i\}$ and $\{G_i\}$, we construct a system of set-pairs $\{(A_i,B_i)\}_{i=1}^{2s}$.  For $1 \le i \le s$, let $A_i = F_i$ and $B_i = G_i$, and for $s+1 \le i \le 2s$, let $A_i = G_{i-s}$ and $B_i = F_{i-s}$. One can check that the set pairs $\{(A_i,B_i)\}$ satisfy the conditions of Theorem~\ref{thm-setpairs}, and hence we deduce that $2s \le \binom{2k}{k}$, and so $\card{\cF_0} = s \le \frac12 \binom{2k}{k}$.

The fact that every maximal intersecting family admits a small generating set allows us to bound the number of maximal intersecting families.

\begin{prop} \label{prop-maxint}
The number of maximal intersecting $k$-uniform hypergraphs over $[n]$ is at most 
\[ \sum_{i=0}^{ \frac12 \binom{2k}{k}} \binom{\binom{n}{k}}{i} \le \binom{n}{k}^{\frac12 \binom{2k}{k}}. \]
\end{prop}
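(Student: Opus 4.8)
The plan is to count maximal intersecting families by encoding each one via a minimal generating set, whose size we have just bounded by $\frac12\binom{2k}{k}$. Concretely, I would argue as follows: every maximal intersecting $k$-uniform hypergraph $\cF$ over $[n]$ is of the form $\cF = \cI(\cF_0)$ for some minimal generating set $\cF_0 \subseteq \binom{[n]}{k}$ with $\card{\cF_0} = s \le \frac12\binom{2k}{k}$. Thus the map $\cF_0 \mapsto \cI(\cF_0)$ from small subsets of $\binom{[n]}{k}$ onto the set of maximal intersecting families is a surjection, so the number of maximal intersecting families is at most the number of subsets of $\binom{[n]}{k}$ of size at most $\frac12\binom{2k}{k}$, namely $\sum_{i=0}^{\frac12\binom{2k}{k}} \binom{\binom{n}{k}}{i}$.

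For the final inequality I would bound this sum crudely. Each term satisfies $\binom{\binom{n}{k}}{i} \le \binom{n}{k}^{i} \le \binom{n}{k}^{\frac12\binom{2k}{k}}$, so the sum is at most $\left(\frac12\binom{2k}{k}+1\right)\binom{n}{k}^{\frac12\binom{2k}{k}}$; absorbing the polynomial-in-$k$ prefactor into the exponent (for instance, noting $\frac12\binom{2k}{k}+1 \le \binom{n}{k}$ whenever $n \ge 2k$, which holds throughout the regimes we consider), one gets the clean bound $\binom{n}{k}^{\frac12\binom{2k}{k}}$. Alternatively one can simply keep the sum as the primary bound and note the right-hand side is a convenient, slightly lossy simplification.

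There is no real obstacle here: the substantive work — showing every maximal intersecting family has a generating set of size at most $\frac12\binom{2k}{k}$ via Frankl's skew Bollob\'as inequality — has already been done in the preceding discussion. The only minor point to be careful about is the direction of the encoding: I am mapping \emph{generating sets} to \emph{maximal families}, and I must check this is well-defined and surjective (each maximal $\cF$ equals $\cI(\cF)$ and has \emph{some} minimal generating subset, hence lies in the image), rather than trying to injectively encode families by generating sets, which would be false since a family can have many minimal generating sets. Once that is set up, the counting is immediate, and the stated chain of inequalities follows.
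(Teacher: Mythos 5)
Your proof is correct and follows essentially the same approach as the paper: bound the size of minimal generating sets via the set-pairs inequality established beforehand, then count subsets of that size. One small caveat about your aside: you claim the injective encoding (mapping each maximal family to a minimal generating set) ``would be false'' because a family can have many minimal generating sets, but that is not so --- one simply fixes an arbitrary choice of minimal generating set for each maximal $\cF$, and injectivity then follows automatically since $\cF_0$ determines $\cF = \cI(\cF_0)$. This is in fact exactly how the paper phrases it; your surjective version is equivalent and equally valid, so the distinction is cosmetic rather than a real obstacle.
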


\begin{proof}
Map each maximal intersecting hypergraph $\cF$ to a minimal generating set $\cF_0 \subset \cF$.  As $\cF = \cI(\cF_0)$, this map is injective.  We have shown above that $\card{\cF_0} \le \frac12 \binom{2k}{k}$, and hence the number of maximal intersecting hypergraphs is bounded by the number of sets of at most $\frac12 \binom{2k}{k}$ edges, which is the sum above.
\end{proof}

\subsection{Enumeration} \label{sec-meth-enum}

Since any subset of a trivial family is itself trivial, it follows that every non-trivial intersecting family must be a subset of a maximal non-trivial family.  Suppose we have a stability result that not only shows that the trivial intersecting families are the largest, but bounds the size of the largest non-trivial family.  We can then use this stability result in conjunction with our bound on the number of maximal families to bound the total number of non-trivial intersecting families.

The following lemma, phrased in general terms that will be applicable in all of our settings, gives sufficient conditions for the trivial families to be typical.

\begin{lemma} \label{lem-cond}
Let $N_0$ denote the size of the largest trivial intersecting family, and let $N_1$ denote the size of the largest non-trivial intersecting family.  Suppose further that there are at most $M$ maximal intersecting families.  Provided
\begin{equation} \label{cond-allsizes}
\log M + N_1 - N_0 \rightarrow - \infty,
\end{equation}
almost all intersecting families are trivial.  Moreover, if $m$ is such that 
\begin{equation} \label{cond-fixedsize}
\log M - m \log \left( \frac{N_0}{N_1} \right) \rightarrow - \infty,
\end{equation}
then almost all intersecting families of size $m$ are trivial.
\end{lemma}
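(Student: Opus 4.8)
The plan is to bound the number of non-trivial intersecting families from above, bound the number of trivial ones from below, and then compare the two. The key structural fact I would use is that triviality is inherited by subfamilies: if $\cF \subseteq \cG$ and every member of $\cG$ contains a fixed common $t$-set, then so does every member of $\cF$. Equivalently, non-triviality is inherited by superfamilies, so greedily extending any non-trivial intersecting family to a maximal intersecting family produces a maximal family that is still non-trivial. Consequently, every non-trivial intersecting family is a subfamily of one of the at most $M$ maximal intersecting families that happen to be non-trivial, each of which has size at most $N_1$. Hence the number of non-trivial intersecting families is at most $M \cdot 2^{N_1}$, and the number of non-trivial intersecting families of size exactly $m$ is at most $M \binom{N_1}{m}$ (each such family lies in one of at most $M$ maximal non-trivial families, which has at most $\binom{N_1}{m}$ subfamilies of size $m$).

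For the matching lower bounds I would fix a single trivial intersecting family of size $N_0$: each of its $2^{N_0}$ subfamilies is a trivial intersecting family, and each of its $\binom{N_0}{m}$ subfamilies of size $m$ is a trivial intersecting family of size $m$. In particular there are at least $2^{N_0}$ intersecting families in total, and at least $\binom{N_0}{m}$ of size $m$.

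Combining these bounds, the fraction of intersecting families that are non-trivial is at most
\[
\frac{M \cdot 2^{N_1}}{2^{N_0}} = 2^{\,\log M + N_1 - N_0},
\]
which tends to $0$ under hypothesis~\eqref{cond-allsizes}, giving the first statement. For the second, the fraction of intersecting families of size $m$ that are non-trivial is at most the ratio of the two counts of size-$m$ families,
\[
\frac{M \binom{N_1}{m}}{\binom{N_0}{m}} = M \prod_{i=0}^{m-1} \frac{N_1 - i}{N_0 - i}.
\]
Since the trivial families are extremal we have $N_1 \le N_0$ (and~\eqref{cond-fixedsize} forces $N_0 \ge N_1$ anyway), so the map $i \mapsto (N_1-i)/(N_0-i)$ is non-increasing on $[0, N_0)$ and every factor is at most $N_1/N_0$. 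Thus the displayed ratio is at most $M (N_1/N_0)^m = 2^{\,\log M - m \log(N_0/N_1)}$, which tends to $0$ by~\eqref{cond-fixedsize}, completing the proof.

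As for difficulty: the lemma itself is essentially a bookkeeping argument, and I expect no real obstacle in proving it. The substantive work — and the genuine obstacle in each application — lies in producing the three inputs it consumes: the bound $M$ on the number of maximal intersecting families (obtained via Frankl's skew set-pairs inequality, Theorem~\ref{thm-setpairs}, as illustrated for hypergraphs in Proposition~\ref{prop-maxint}), the exact size $N_0$ of a maximum trivial family, and a stability bound $N_1$ on the size of a maximum non-trivial family, after which one must verify that the arithmetic conditions~\eqref{cond-allsizes} and~\eqref{cond-fixedsize} hold in the desired parameter range. The only mild subtlety internal to the lemma is the estimate $\binom{N_1}{m}/\binom{N_0}{m} \le (N_1/N_0)^m$, valid precisely because $N_1 \le N_0$.
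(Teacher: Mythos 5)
Your proof is correct and follows essentially the same route as the paper: both arguments bound the non-trivial families from above by $M\cdot 2^{N_1}$ (respectively $M\binom{N_1}{m}$) via maximal non-trivial families, bound the trivial families from below by $2^{N_0}$ (respectively $\binom{N_0}{m}$) via subfamilies of a single maximum trivial family, and conclude with the ratio bound $\binom{N_1}{m}/\binom{N_0}{m}\le (N_1/N_0)^m$. The only difference is that you spell out the greedy extension and the term-by-term verification of the binomial-ratio inequality, which the paper leaves implicit.
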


\begin{proof}
Since a largest trivial intersecting family has size $N_0$, and all of its subfamilies are also trivial, there are at least $2^{N_0}$ trivial families.  On the other hand, every non-trivial intersecting family is a subset of a maximal non-trivial intersecting family.  Each maximal non-trivial family has size at most $N_1$, and thus at most $2^{N_1}$ subfamilies.  Since there are at most $M$ maximal families, the number of non-trivial families is at most $M 2^{N_1}$.  The proportion of non-trivial families is thus at most $M 2^{N_1} / 2^{N_0}$, which tends to $0$ by \eqref{cond-allsizes}.  Hence, given \eqref{cond-allsizes}, almost all intersecting families are trivial.

For the second claim, observe that the number of trivial subfamilies of size $m$ is at least $\binom{N_0}{m}$ by considering subfamilies of one fixed trivial family.  On the other hand, each non-trivial family has at most $\binom{N_1}{m}$ subfamilies of size $m$, and hence there are at most $M \binom{N_1}{m}$ non-trivial families of size $m$.  We can thus bound the proportion of intersecting families of size $m$ that are non-trivial by 
\[ M \binom{N_1}{m} / \binom{N_0}{m} \le M \left( \frac{N_1}{N_0} \right)^m, \]
which tends to $0$ by \eqref{cond-fixedsize}.  Hence almost all intersecting families of size $m$ are trivial as well.
\end{proof}

Within the context of intersecting hypergraphs, the Erd\H{o}s--Ko--Rado theorem~\cite{ekr61} states that for $n > 2k$, the largest intersecting $k$-uniform hypergraphs over $[n]$ are trivial, having size $\binom{n-1}{k-1}$.  A stability result was given by Hilton and Milner~\cite{hm67}, who showed that for the same range, the largest non-trivial intersecting hypergraphs have size $\binom{n-1}{k-1} - \binom{n-k-1}{k-1} + 1$.  These thus give the values of $N_0$ and $N_1$ respectively, while $M$ is given by Proposition~\ref{prop-maxint}.

Finally, once having determined that almost all intersecting families are trivial, we will still have to count the number of such families.  The following lemma shows when the union bound over all maximal trivial families gives an asymptotical correct result.

\begin{lemma} \label{lem-union}
Let $T$ denote the number of maximal trivial intersecting families, and suppose they all have the same size $N_0$.  Suppose further that two distinct maximal families can have at most $N_2$ members in common.  Provided
\begin{equation} \label{cond-union}
2 \log T + N_2 - N_0 \rightarrow - \infty,
\end{equation}
the number of trivial intersecting families is $\left( T + o(1) \right) 2^{N_0}$.
\end{lemma}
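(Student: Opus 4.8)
The plan is to apply inclusion–exclusion, truncated at the second term (Bonferroni), to count subfamilies of the union of all maximal trivial intersecting families. Write $\cT_1, \cT_2, \hdots, \cT_T$ for the maximal trivial intersecting families, each of size $N_0$ by hypothesis. A family is trivial precisely when it is a subfamily of some $\cT_j$, so the number of trivial intersecting families equals $\card{ \bigcup_{j=1}^T \cP(\cT_j) }$, where $\cP(\cT_j)$ denotes the power set of $\cT_j$. By Bonferroni's inequalities,
\[ \sum_{j=1}^T \card{\cP(\cT_j)} - \sum_{j < j'} \card{\cP(\cT_j) \cap \cP(\cT_{j'})} \;\le\; \Big| \bigcup_{j=1}^T \cP(\cT_j) \Big| \;\le\; \sum_{j=1}^T \card{\cP(\cT_j)}. \]
The upper bound is exactly $T \cdot 2^{N_0}$. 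For the lower bound, I would observe that $\cP(\cT_j) \cap \cP(\cT_{j'}) = \cP(\cT_j \cap \cT_{j'})$, which has size $2^{\card{\cT_j \cap \cT_{j'}}} \le 2^{N_2}$ by the assumption that two distinct maximal families share at most $N_2$ members. There are $\binom{T}{2} \le T^2$ such pairs, so the subtracted term is at most $T^2 2^{N_2}$.

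Combining the two bounds, the number of trivial intersecting families lies between $T 2^{N_0} - T^2 2^{N_2}$ and $T 2^{N_0}$, so it equals $\left( T - o(T) + O(1) \right) 2^{N_0}$ provided the error term is negligible, i.e.\ provided $T^2 2^{N_2} = o\!\left( 2^{N_0} \right)$. Taking logarithms, this is exactly the stated condition $2 \log T + N_2 - N_0 \to -\infty$. Hence under \eqref{cond-union} the count is $\left( T + o(1) \right) 2^{N_0}$, as claimed. (Strictly, the lower bound gives $T 2^{N_0}(1 - T 2^{N_2 - N_0})$, and $T 2^{N_2 - N_0} \le 2^{2\log T + N_2 - N_0} \to 0$, so the multiplicative error is $1 - o(1)$; writing this as an additive $o(1)$ in front of $2^{N_0}$ is immediate.)

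There is essentially no hard step here; the only point requiring a little care is the identity $\cP(\cT_j) \cap \cP(\cT_{j'}) = \cP(\cT_j \cap \cT_{j'})$, which holds because a set is a subset of both $\cT_j$ and $\cT_{j'}$ if and only if it is a subset of their intersection. The genuine content of the lemma lies outside its proof: one must, in each application, verify the hypotheses — that all maximal trivial families have a common size $N_0$, produce the bound $N_2$ on pairwise intersections, and check \eqref{cond-union}. For intersecting $k$-uniform hypergraphs, for instance, the maximal trivial families are the stars (all $k$-sets through a fixed vertex), so $N_0 = \binom{n-1}{k-1}$, $T = n$, and two distinct stars through vertices $u \ne v$ share exactly the $k$-sets containing both, giving $N_2 = \binom{n-2}{k-2}$; condition \eqref{cond-union} then reads $2 \log n + \binom{n-2}{k-2} - \binom{n-1}{k-1} \to -\infty$, which holds comfortably in the relevant range. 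Analogous verifications will be needed for permutations and for vector spaces.
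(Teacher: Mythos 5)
Your argument is correct and is essentially identical to the paper's proof: both apply the truncated inclusion–exclusion (Bonferroni) bound to $\bigcup_j \cP(\cT_j)$, use the identity $\cP(\cT_j) \cap \cP(\cT_{j'}) = \cP(\cT_j \cap \cT_{j'})$ to bound each pairwise term by $2^{N_2}$, and conclude via $\binom{T}{2} 2^{N_2} = o(2^{N_0})$ under \eqref{cond-union}. No differences worth noting.
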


\begin{proof}
Suppose $\cF_1, \hdots, \cF_T$ are the maximal trivial intersecting families.  Every trivial family is a subset of some $\cF_i$, and hence the collection of trivial families is given by $\cup_{i=1}^T \cP(\cF_i)$.  The Bonferroni inequalities state that, for any sets $\cG_1, \hdots, \cG_m$,
\[ \sum_i \card{\cG_i} - \sum_{i < j} \card{\cG_i \cap \cG_j} \le \card{\cup_i \cG_i} \le \sum_i \card{\cG_i}. \]
Applying this with $\cG_i = \cP ( \cF_i )$ for $1 \le i \le m = T$, we have $\card{\cG_i} = \card{\cP(\cF_i)} = 2^{N_0}$ and $\card{\cG_i \cap \cG_j} = \card{\cP(\cF_i \cap \cF_j)} \le 2^{N_2}$.  This gives
\[ \sum_i \card{\cG_i} = T \cdot 2^{N_0} \; \textrm{ and } \; \sum_{i<j} \card{\cG_i \cap \cG_j} \le 2^{N_2} \binom{T}{2} < 2^{2 \log T + N_2 - N_0} \cdot 2^{N_0} = o \left( 2^{N_0} \right), \]
from which the result follows.
\end{proof}

This framework, coupled with the appropriate extremal and stability theorems, allows us to obtain our results, although minor modifications are required in the various settings.  In the following sections we describe the necessary changes and present the calculations needed to apply Lemmas~\ref{lem-cond} and~\ref{lem-union}.

\section{Intersecting families of permutations}\label{sec-perm}

In this section, we furnish the details required in the setting of permutations.  Following the framework introduced in Section~\ref{sec-meth}, we first bound the number of maximal $t$-intersecting families of permutations, and then deduce from this Theorems~\ref{thm-perm-all},~\ref{thm-perm-fixed} and~\ref{thm-perm-random}.

\begin{prop} \label{prop-perm-max}
For any $n \ge t \ge 1$, the number of maximal $t$-intersecting families in $S_n$ is at most
\[ \sum_{i = 0}^{\frac12 \binom{2n - 2t + 2}{n - t + 1}} \binom{n!}{i} < n^{n 2^{2n - 2t + 1}}. \]
\end{prop}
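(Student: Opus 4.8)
The plan is to mirror the hypergraph argument from Section~\ref{sec-meth-max}, replacing $k$-sets by permutations and set-intersection by agreement of permutations. The key structural fact is that every maximal $t$-intersecting family $\cF \subseteq S_n$ has a small minimal generating set, where for a family $\cG \subseteq S_n$ we write $\cI(\cG) = \{\pi \in S_n : |\pi \cap \sigma| \ge t \text{ for all } \sigma \in \cG\}$, so that $\cF$ is maximal exactly when $\cF = \cI(\cF)$, and $\cG$ generates $\cF$ when $\cI(\cG) = \cF$. First I would take a minimal generating set $\cF_0 = \{\sigma_1, \dots, \sigma_s\}$; by minimality, for each $i$ there is a permutation $\tau_i \in \cI(\cF_0 \setminus \{\sigma_i\}) \setminus \cF$. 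As in the hypergraph case, $\tau_i$ agrees with $\sigma_j$ in at least $t$ positions for every $j \ne i$, but agrees with $\sigma_i$ in at most $t-1$ positions.

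The second step is to encode these agreement conditions as a set-pair system so that Frankl's skew Bollob\'as inequality (Theorem~\ref{thm-setpairs}) applies. The natural move is to pass from a permutation $\pi$ to its graph $\{(j, \pi(j)) : j \in [n]\} \subseteq [n] \times [n]$. Two permutations agreeing on $t$ indices corresponds to their graphs sharing $t$ points. To get disjointness rather than ``small intersection'', I would choose, for each $i$, a specific $(t-1)$-subset of positions on which $\sigma_i$ and $\tau_i$ agree (if they agree on fewer, pad appropriately or handle the degenerate case directly), delete those $t-1$ common points from both graphs, and let $A_i$ be the reduced graph of $\sigma_i$ and $B_i$ the reduced graph of $\tau_i$; then $|A_i| = |B_i| = n - t + 1$, $A_i \cap B_i = \emptyset$, and $A_i \cap B_j \ne \emptyset$ for $i \ne j$ because $\sigma_i$ and $\tau_j$ still agree somewhere outside the deleted coordinates (they agree in $\ge t$ places but only $t-1$ were removed, and the removed coordinates were chosen from $\sigma_i \cap \tau_i$, not $\sigma_i \cap \tau_j$ — this needs a small amount of care). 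Doubling the system as in Section~\ref{sec-meth-max} — pairs $(A_i, B_i)$ for $i \le s$ and $(B_{i-s}, A_{i-s})$ for $s < i \le 2s$ — and applying Theorem~\ref{thm-setpairs} with $a = b = n-t+1$ yields $2s \le \binom{2n-2t+2}{n-t+1}$, hence $|\cF_0| = s \le \tfrac12\binom{2n-2t+2}{n-t+1}$.

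The final step is the counting bound, exactly parallel to Proposition~\ref{prop-maxint}: the map sending a maximal $t$-intersecting family to a minimal generating set is injective (since $\cF = \cI(\cF_0)$ recovers $\cF$), so the number of maximal $t$-intersecting families is at most the number of subsets of $S_n$ of size at most $\tfrac12\binom{2n-2t+2}{n-t+1}$, which is $\sum_{i=0}^{\frac12\binom{2n-2t+2}{n-t+1}} \binom{n!}{i}$. Bounding this sum crudely by $(n!+1)^{\frac12\binom{2n-2t+2}{n-t+1}} \le n^{n \cdot \binom{2n-2t+2}{n-t+1}/2} \le n^{n 2^{2n-2t+1}}$, using $n! \le n^n$ and $\binom{2n-2t+2}{n-t+1} \le 2^{2n-2t+2}$, gives the stated estimate.

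The main obstacle I anticipate is the precise choice of which $t-1$ common coordinates of $\sigma_i$ and $\tau_i$ to delete, so that the cross-intersection property $A_i \cap B_j \ne \emptyset$ is genuinely preserved: one must ensure the deleted coordinates are ``private'' to the pair $(i,i)$ and do not accidentally account for all the agreement between $\sigma_i$ and some $\tau_j$. If $\sigma_i$ and $\tau_i$ happen to agree on \emph{exactly} $t-1$ coordinates this is clean; in the general case one should fix an arbitrary $(t-1)$-subset of $\sigma_i \cap \tau_i$ and observe that since $|\sigma_i \cap \tau_j| \ge t > t-1$ for $j \ne i$, some agreement point of $\sigma_i$ and $\tau_j$ survives. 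A degenerate subtlety is when $s \le 1$, i.e.\ $\cF$ is generated by a single permutation or is all of $S_n$; these cases make the bound trivially true and can be dispatched separately.
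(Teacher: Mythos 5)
Your proposal correctly identifies the overall strategy (encode each permutation $\pi$ as its graph $H_\pi = \{(j,\pi(j)):j\in[n]\} \subseteq [n]\times[n]$, pass from a minimal generating set to a doubled set-pair system, apply a Bollob\'as-type inequality, then count), and the final counting step is fine. However, there is a genuine gap in the reduction to Frankl's skew Bollob\'as inequality (Theorem~\ref{thm-setpairs}): the ``delete $t-1$ common coordinates'' trick does \emph{not} preserve the cross-intersection property for $t \ge 2$, and the heuristic you offer to dismiss this worry is incorrect because it accounts for the points deleted from $A_i$ but not for those deleted from $B_j$.

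Concretely, in your construction $A_i = H_{\sigma_i}\setminus D_i$ and $B_j = H_{\tau_j}\setminus D_j$ where $D_i \supseteq H_{\sigma_i}\cap H_{\tau_i}$ and $D_j \supseteq H_{\sigma_j}\cap H_{\tau_j}$ each have size $t-1$. For $i \ne j$ we have $A_i \cap B_j = (H_{\sigma_i}\cap H_{\tau_j}) \setminus (D_i\cup D_j)$, and the set being subtracted can have up to $2(t-1)$ elements inside $H_{\sigma_i}\cap H_{\tau_j}$, which for $t\ge 2$ can swallow the guaranteed $t$ points of agreement. For instance with $t=2$, if $\sigma_1,\tau_1$ agree exactly at index $1$, $\sigma_2,\tau_2$ agree exactly at index $2$, and $\sigma_1,\tau_2$ agree exactly at indices $\{1,2\}$ with $\sigma_1(2)=\tau_2(2)$, then $D_1=\{(1,\sigma_1(1))\}$, $D_2=\{(2,\tau_2(2))\}$, and $A_1 \cap B_2 = \{(1,\sigma_1(1)),(2,\tau_2(2))\}\setminus(D_1\cup D_2)=\emptyset$, so the hypothesis of Theorem~\ref{thm-setpairs} fails. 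The correct tool here, which the paper invokes directly, is F\"uredi's $t$-intersecting version of the Bollob\'as set-pairs inequality (Theorem~\ref{thm-tsetpairs}), whose hypotheses are exactly ``$\card{A_i\cap B_i}<t$, $\card{A_i\cap B_j}\ge t$ for $i<j$'' with conclusion $m\le\binom{a+b-2t+2}{a-t+1}$; one applies it to the doubled system of $n$-element graphs $H_{\sigma_i},H_{\tau_i}$ with $a=b=n$, yielding $2s\le\binom{2n-2t+2}{n-t+1}$ with no deletion needed. Your remaining steps (injectivity of $\cF\mapsto\cF_0$ via $\cF=\cI(\cF_0)$, and the bound $n!\le n^n$, $\binom{2n-2t+2}{n-t+1}\le 2^{2n-2t+2}$) then go through unchanged.
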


\begin{proof}
Following the proof of Proposition~\ref{prop-maxint}, for a maximal $t$-intersecting family $\cF$, we define $\cI(\cF) = \left\{ \pi \in S_n : \forall \sigma \in \cF, \; |\pi \cap \sigma| \ge t \right\}$.
Let $\cF_0 = \{ \sigma_1, \hdots, \sigma_s \} \subset \cF$ be a minimal generating set.  By minimality, for each $1 \le i \le s$ we have some $\tau_i \in S_n$ such that $\card{\sigma_j \cap \tau_i} < t$ if and only if $i=j$.

To a permutation $\pi$ we may assign the $n$-set of pairs $H_{\pi} = \{ (1, \pi(1)), \hdots, (n, \pi(n)) \}$.  Observe that for any two permutations $\pi$ and $\pi'$, $\card{H_{\pi} \cap H_{\pi'}} = \card{\pi \cap \pi'}$.  Hence, if we denote $F_i = H_{\sigma_i}$ and $G_i = H_{\tau_i}$, we have $\card{F_i \cap G_j} < t$ if and only if $i = j$.

We now require the $t$-intersecting version of the Bollob\'as set-pairs inequality, proven by F\"uredi~\cite{f84}.

\begin{theorem}[F\"uredi] \label{thm-tsetpairs}
Let $A_1, \hdots, A_m$ be sets of size $a$ and $B_1, \hdots, B_m$ be sets of size $b$ such that we have $\card{A_i \cap B_i} < t$ and $\card{A_i \cap B_j} \ge t$ for $1 \le i < j \le m$.  Then $m \le \binom{a + b - 2t + 2}{a - t + 1}$.
\end{theorem}

We apply this to the sets $\{(A_i, B_i)\}_{i=1}^{2s}$, where for $1 \le i \le s$ we take $A_i = F_i$ and $B_i = G_i$, and for $s+1 \le i \le 2s$ we set $A_i = G_{i - s}$ and $B_i = F_{i - s}$.  The conditions of Theorem~\ref{thm-tsetpairs} are clearly satisfied, and hence we deduce $s \le \frac12 \binom{2n - 2t + 2}{n - t +1}$.

Thus, to every maximal family $\cF$ we may assign a distinct generating set of at most $\frac12 \binom{2n - 2t + 2}{n -t + 1}$ permutations, giving the above sum as a bound on the number of maximal families.  The upper bound follows since $n! \le n^n$ and $\binom{2n - 2t + 2}{n - t + 1} \le 2^{2n - 2t + 2}$.
\end{proof}

Given this bound, we apply Lemmas~\ref{lem-cond} and~\ref{lem-union} to prove our enumerative results.  Proposition~\ref{prop-perm-max} shows that we may take $M = n^{n 2^{2n - 2t + 1}}$.  Each trivial family, on the other hand, has to fix the images of $t$ indices.  There are $\binom{n}{t}$ ways to choose the indices, $\binom{n}{t}$ ways to choose their images, and $t!$ ways to assign the images to the indices, and thus $T = \binom{n}{t}^2 t!$ maximal trivial families.

The required extremal result is due to Ellis, Friedgut and Pilpel~\cite{efp}, who showed that for $n$ sufficiently large with respect to $t$, the largest $t$-intersecting families in $S_n$ are the trivial ones.  In a trivial family, $t$ indices are fixed, while we are free to permute the remaining $n-t$ indices.  Hence we have $N_0 = (n-t)!$.  Moreover, note that there are at least $t+1$ fixed indices in the intersection of two trivial families, and so $N_2 = (n-t-1)!$.  Finally, a stability result was obtained by Ellis~\cite{e11}, showing that when $t$ is fixed and $n$ tends to infinity, the largest non-trivial $t$-intersecting family has size $N_1 = \left( 1 - 1/e + o(1) \right) (n-t)!$.  We now proceed to prove Theorems~\ref{thm-perm-all} and~\ref{thm-perm-fixed}.

\begin{proof}[Proof of Theorem~\ref{thm-perm-all}]
We first apply Lemma~\ref{lem-cond} to show that almost all intersecting families are trivial.  We have
\[ \log M + N_1 - N_0 = n 2^{2n - 2t + 1} \log n - \left( 1/e + o(1) \right) (n-t)! \rightarrow - \infty, \]
and so~\eqref{cond-allsizes} is satisfied.  This shows that the number of non-trivial $t$-intersecting families is $o \left(2^{(n-t)!} \right)$.

We use Lemma~\ref{lem-union} to count the number of trivial families.  We see that~\eqref{cond-union} holds, since
\[ 2 \log T + N_2 - N_0 = 2 \log \left( \binom{n}{t}^2 t! \right) + (n-t-1)!-(n-t)! \le 4t \log (nt) - (n-t-1)(n-t-1)! \rightarrow - \infty. \]
Hence the number of trivial families is $\left( \binom{n}{t}^2 t! + o(1) \right) 2^{(n-t)!}$.  As the non-trivial families constitute a lower-order term, this completes the proof. 
\end{proof}

\begin{proof}[Proof of Theorem~\ref{thm-perm-fixed}]
To prove that almost every $t$-intersecting family of $m$ permutations is trivial, we show that~\eqref{cond-fixedsize} is satisfied.  Indeed, for $m \ge n2^{2n-2t+2} \log n$,
\begin{align*}
\log M - m \log \left( \frac{N_0}{N_1} \right) &= n2^{2n-2t+1} \log n - m \log \left( \frac{(n-t)!}{\left( 1 - 1/e + o(1) \right) (n-t)!} \right) \\
	& \le n 2^{2n - 2t + 1} \log n - 0.6 m \rightarrow - \infty.  \qedhere
\end{align*}
\end{proof}

Finally, we seek to prove Theorem~\ref{thm-perm-random}, showing that when $p \ge \frac{800 n 2^{2n - 2t} \log n}{(n-t)!}$, with high probability the largest $t$-intersecting family in the $p$-random set of permutations $(S_n)_p$ is trivial.

Let $\cT \subset S_n$ be a fixed maximal trivial family, and let $\cF_1, \hdots, \cF_M$ be the maximal non-trivial families.  Then the largest trivial family in $(S_n)_p$ has size at least $\card{(\cT)_p}$, while the largest non-trivial family has size $\max_i \card{(\cF_i)_p}$.  In expectation, $\bE \left[ \card{(\cT)_p} \right] = p\card{\cT} > p \card{\cF_i} = \bE \left[ \card{ (\cF_i)_p } \right]$, and our bound on $M$ is strong enough for a union bound calculation to go through.  We require the following version of Hoeffding's Inequality that is derived from~\cite[Theorem 2.3]{mcd}.

\begin{theorem}[Hoeffding] \label{thm-hoeffding}
Let the random variables $X_1, X_2, \hdots, X_n$ be independent, with $0 \le X_k \le 1$ for each $k$.  Let $X = \sum_{k=1}^n X_k$, let $\mu = \bE [X]$.  Then, for any $\ep > 0$,
\[ \bP \left( X \ge (1 + \ep) \mu \right) \le \mathrm{exp} \left( - \frac12 \ep^2 \mu \right) \quad \textrm{ and } \quad \bP \left( X \le ( 1- \ep) \mu \right) \le \mathrm{exp} \left( - \frac12 \ep^2 \mu \right). \]
\end{theorem}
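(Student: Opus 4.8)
The plan is to recognise these as the standard lower- and upper-tail Chernoff--Hoeffding bounds for a sum of bounded independent random variables, and to deduce them from~\cite[Theorem 2.3]{mcd} by the substitution $t = \ep\mu$ into the tail estimates stated there for $\bP(X \le \mu - t)$ and $\bP(X \ge \mu + t)$. Since only a harmless simplification of constants is involved, it is just as quick to reproduce the underlying self-contained argument, namely the exponential-moment (Chernoff) method, which I would carry out as follows.

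Write $p_k = \bE[X_k]$, so that $\mu = \sum_{k=1}^n p_k$. For the upper tail, fix $\lambda > 0$ and apply Markov's inequality to $e^{\lambda X}$, using independence of the $X_k$, to get $\bP(X \ge (1+\ep)\mu) \le e^{-\lambda(1+\ep)\mu}\prod_{k=1}^n \bE\!\left[e^{\lambda X_k}\right]$. Because $y \mapsto e^{\lambda y}$ is convex and $X_k$ takes values in $[0,1]$, we have the pointwise bound $e^{\lambda X_k} \le 1 - X_k + X_k e^{\lambda}$, hence $\bE[e^{\lambda X_k}] \le 1 + p_k(e^\lambda - 1) \le \exp\!\left(p_k(e^\lambda - 1)\right)$, and multiplying over $k$ bounds $\bE[e^{\lambda X}]$ by $\exp\!\left(\mu(e^\lambda - 1)\right)$. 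Minimising the resulting estimate $\exp\!\left(\mu\big(e^\lambda - 1 - \lambda(1+\ep)\big)\right)$ over $\lambda > 0$ at $\lambda = \ln(1+\ep)$ gives the classical bound $\bP(X \ge (1+\ep)\mu) \le \big(e^{\ep}(1+\ep)^{-(1+\ep)}\big)^{\mu}$, from which the stated exponential bound follows in the (bounded-$\ep$) regime used in the applications via the standard elementary inequality for the function $(1+\ep)\ln(1+\ep) - \ep$. The lower tail is symmetric: apply Markov to $e^{-\lambda X}$ with $\lambda > 0$, use $e^{-\lambda X_k} \le 1 - X_k + X_k e^{-\lambda}$ and the same product step, optimise at $\lambda = -\ln(1-\ep)$ (the probability being $0$ if $\ep \ge 1$), and conclude with the calculus inequality $(1-\ep)\ln(1-\ep) + \ep \ge \tfrac12 \ep^2$ for $0 \le \ep < 1$, which one verifies by observing that both sides vanish at $\ep = 0$ and comparing derivatives.

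There is no genuine obstacle here: the result is classical, and the only content beyond the routine Chernoff computation is the pair of one-variable inequalities bounding $(1 \pm \ep)\ln(1\pm\ep)$ from below. The single point that warrants care is matching the constant in the upper-tail simplification to the standard multiplicative Chernoff form in the range of $\ep$ that arises in the proof of Theorem~\ref{thm-perm-random}; alternatively, and most economically, one simply cites~\cite[Theorem 2.3]{mcd} directly and performs the substitution $t = \ep\mu$, which is all that is needed in the sequel.
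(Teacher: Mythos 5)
The paper itself offers no proof of this statement --- it simply cites~\cite[Theorem 2.3]{mcd} --- so your choice to reconstruct the exponential-moment argument is a reasonable way to check the claim. Your lower-tail step is correct: the optimised Chernoff bound gives $\bP(X \le (1-\ep)\mu) \le \exp(-\mu\,\psi(\ep))$ with $\psi(\ep) = (1-\ep)\ln(1-\ep)+\ep$, and $\psi(\ep) \ge \tfrac12\ep^2$ on $[0,1)$ does hold, verified exactly as you say by comparing derivatives, since $\psi'(\ep) = -\ln(1-\ep) - \ep \ge 0$.

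Your upper-tail step, however, has a genuine gap. There you reach the exponent $\varphi(\ep) = (1+\ep)\ln(1+\ep) - \ep$ and assert that $\varphi(\ep) \ge \tfrac12\ep^2$ follows from ``the standard elementary inequality'' for this function. That inequality does not exist: $\varphi(0)=0$ and $\varphi'(\ep) = \ln(1+\ep) \le \ep$, so in fact $\varphi(\ep) \le \tfrac12\ep^2$ for all $\ep > 0$, and passing from $\exp(-\mu\varphi(\ep))$ to $\exp(-\tfrac12\ep^2\mu)$ is a \emph{strengthening}, not a simplification. Since the Chernoff bound is asymptotically sharp up to polynomial factors (e.g.\ in the Poisson limit), the upper-tail inequality as displayed with constant $\tfrac12$ is actually false once $\mu$ is large enough at fixed $\ep$. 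What the Chernoff computation does give, and what McDiarmid's Theorem~2.3 records, is the Bernstein-type bound $\varphi(\ep) \ge \frac{\ep^2}{2(1+\ep/3)}$, hence $\bP(X \ge (1+\ep)\mu) \le \exp\bigl(-\frac{\ep^2\mu}{2(1+\ep/3)}\bigr)$, or more crudely $\exp(-\tfrac13\ep^2\mu)$ for $0 < \ep \le 1$. You should either prove that weaker form, or flag that the displayed upper-tail constant appears overstated; the weaker constant is harmless in both applications (Theorems~\ref{thm-perm-random} and~\ref{thm-hyp-random} use $\ep = 1/10$ and $\ep = \tau/(pN_1) < 1$ respectively, and the paper's constants $800$ and $1/18$ already have enough slack to absorb the change from $\tfrac12$ to $\tfrac13$).
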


\begin{proof} [Proof of Theorem~\ref{thm-perm-random}]
Let $(\cT)_p = \cT \cap (S_n)_p$, let $(\cF_i)_p = \cF_i \cap (S_n)_p$, and set $\ep = 1/10$.  Let $E_0$ be the event that $\card{(\cT)_p} < (1 - \ep)p \card{\cT} = (1 - \ep) p N_0$, and let $E_i$ be the event that $\card{(\cF_i)_p} > (1 + \ep) p N_1$.  Since $N_0 = (n-t)!$ and $N_1 = \left(1 - 1/e + o(1) \right) (n-t)!$, we have $(1 + \ep) p N_1 < (1 - \ep) p N_0$.  If there is a non-trivial largest $t$-intersecting family in $(S_n)_p$, we must have $\max_i \card{(\cF_i)_p} \ge \card{(\cT)_p}$, and so at least one of the events $E_j$, $0 \le j \le M$, must hold.

Now $\card{(\cT)_p} \sim \mathrm{Bin}(N_0,p)$, and so applying Theorem~\ref{thm-hoeffding} with $\mu = p N_0$, we have $\bP( E_0 ) \le \mathrm{exp} \left( - \frac{pN_0}{200} \right)$.  Similarly, for $1 \le i \le M$, $\card{(\cF_i)_p} \sim \mathrm{Bin}(\card{\cF_i}, p)$, where $\card{\cF_i} \le N_1$.  Let $X \sim \mathrm{Bin}(N_1,p)$.  Applying Theorem~\ref{thm-hoeffding} to $X$ with $\mu = p N_1$, we have
\[ \bP( E_i ) = \bP( \card{(\cF_i)_p} \ge (1 + \ep) p N_1 ) \le \bP ( X \ge (1 + \ep) p N_1) \le \mathrm{exp} \left( - \frac{p N_1}{200} \right). \]

Hence, by the union bound,
\[ \bP \left( \cup_{i=0}^M E_i \right) = \mathrm{exp} \left( - \frac{p N_0}{200} \right) + M \mathrm{exp} \left( - \frac{p N_1}{200} \right) \le \left(n^{n 2^{2n - 2t + 1}}+1\right)\cdot \mathrm{exp} \left( - \frac{ p N_1}{200} \right) = o(1) \]
when $p \ge \frac{800 n 2^{2n - 2t} \log n}{ (n - t)!} \ge \frac{200}{N_1} n 2^{2n - 2t + 1} \log n$.  Thus, for such $p$, the largest $t$-intersecting families in $(S_n)_p$ are trivial with high probability.
\end{proof}

\section{Intersecting hypergraphs} \label{sec-hyp}

We now turn our attention to $t$-intersecting hypergraphs, and seek to prove Theorems~\ref{thm-hyp-count} and~\ref{thm-hyp-random}.  The proof of Theorem~\ref{thm-hyp-containers} uses a different method, and is given in Section~\ref{sec-containers}.

We begin with a bound on the number of maximal $t$-intersecting hypergraphs.

\begin{prop} \label{prop-hyp-max}
The number of maximal $t$-intersecting $k$-uniform hypergraphs on $[n]$ is at most
\[ \sum_{i = 1}^{\binom{2(k-t)+1}{k-t}} \binom{\binom{n}{k}}{i} \le \binom{n}{k}^{\binom{2(k-t)+1}{k-t}}. \]
\end{prop}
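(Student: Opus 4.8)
The plan is to mirror the proofs of Propositions~\ref{prop-maxint} and~\ref{prop-perm-max}, with F\"uredi's $t$-intersecting set-pairs inequality (Theorem~\ref{thm-tsetpairs}) in place of the Bollob\'as inequality. For a family $\cF \subseteq \binom{[n]}{k}$, set $\cI(\cF) = \{\, G \in \binom{[n]}{k} : |G \cap F| \ge t \text{ for all } F \in \cF \,\}$, so that $\cF$ is $t$-intersecting iff $\cF \subseteq \cI(\cF)$ and maximal iff $\cF = \cI(\cF)$. Given a maximal $t$-intersecting $\cF$, fix a minimal generating set $\cF_0 = \{F_1, \dots, F_s\}$ with $\cI(\cF_0) = \cF$. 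By minimality, for each $i$ there is an edge $G_i \in \cI(\cF_0 \setminus \{F_i\}) \setminus \cF$; since $G_i$ $t$-intersects every member of $\cF_0$ except possibly $F_i$, but $G_i \notin \cI(\cF_0)$, we get $|G_i \cap F_j| \ge t$ for all $j \ne i$ while $|G_i \cap F_i| \le t-1$.

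Next I would feed the $2s$ ordered set-pairs into Theorem~\ref{thm-tsetpairs}: for $1 \le i \le s$ take $(A_i,B_i) = (F_i, G_i)$, and for $s+1 \le i \le 2s$ take $(A_i,B_i) = (G_{i-s}, F_{i-s})$, all of size $k$. The point to check carefully is that this particular ordering makes the skew hypotheses hold: each diagonal pair satisfies $|A_i \cap B_i| < t$ (it is $|F_i \cap G_i|$, resp.\ $|G_\ell \cap F_\ell|$), while for every $i < j$ one compares an $F$ with a $G$, or an $F$ with an $F$, or a $G$ with an $F$ — never a $G$ with a $G$ — and in each of these cases the intersection has size $\ge t$ (using that $\cF_0$ is itself $t$-intersecting, that $|F_i| = k > t$, and the defining property of the $G_i$). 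It is essential to invoke the \emph{skew} version here, since we have no control over $|G_i \cap G_j|$; reordering so that those comparisons never arise is exactly what the construction above achieves. Theorem~\ref{thm-tsetpairs} then gives $2s \le \binom{2k-2t+2}{k-t+1}$, and the elementary identity $\binom{2k-2t+2}{k-t+1} = 2\binom{2(k-t)+1}{k-t}$ yields $s \le \binom{2(k-t)+1}{k-t}$.

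Finally, the assignment of each maximal $t$-intersecting hypergraph to one of its minimal generating sets is injective, since $\cF = \cI(\cF_0)$ is recovered from $\cF_0$. Hence the number of maximal families is at most the number of nonempty subfamilies of $\binom{[n]}{k}$ of size at most $\binom{2(k-t)+1}{k-t}$, which is $\sum_{i=1}^{\binom{2(k-t)+1}{k-t}}\binom{\binom{n}{k}}{i}$; the bound $\le \binom{n}{k}^{\binom{2(k-t)+1}{k-t}}$ is then a routine estimate (encode a nonempty subset of size at most $B$ as a $B$-tuple over $\binom{[n]}{k}$ by listing its elements and padding). I do not anticipate any serious obstacle: the only genuinely new ingredient over the $t=1$ case is the correct instantiation of F\"uredi's inequality with the right ordering of the $2s$ set-pairs, plus the constant-factor arithmetic that sharpens $\binom{2k-2t+2}{k-t+1}$ to the claimed $\binom{2(k-t)+1}{k-t}$.
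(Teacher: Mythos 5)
Your proposal is correct and matches the paper's proof: the paper likewise proves this by repeating the argument of Proposition~\ref{prop-maxint} with F\"uredi's Theorem~\ref{thm-tsetpairs} in place of Theorem~\ref{thm-setpairs}, obtaining $s \le \tfrac12\binom{2(k-t)+2}{k-t+1} = \binom{2(k-t)+1}{k-t}$ and then counting generating sets. Your write-up actually spells out the ordering check on the $2s$ set-pairs and the binomial identity more explicitly than the paper's one-line reference, but the approach is identical.
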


\begin{proof}
The proof of this proposition follows the proof of Proposition~\ref{prop-maxint}, except we must replace Theorem~\ref{thm-setpairs} with its $t$-intersecting version Theorem~\ref{thm-tsetpairs}.  This shows that every $t$-intersecting hypergraph admits a minimal generating set of at most $\frac12 \binom{2(k-t) + 2}{k-t + 1} = \binom{2(k-t)+1}{k-t}$ edges.  Thus the map from maximal hypergraphs to their minimal generating sets injects into sets of at most $\binom{2(k-t)+1}{k-t}$ edges, resulting in the upper bound above.
\end{proof}

We shall now use Lemma~\ref{lem-cond} to show that almost every $t$-intersecting hypergraph is trivial.  Proposition~\ref{prop-hyp-max} supplies us with the value of $M$ required.  The Erd\H{o}s--Ko--Rado theorem \cite{ekr61} states that for $n$ sufficiently large, the largest $t$-intersecting hypergraphs are the trivial ones, which have size $N_0 = \binom{n-t}{k-t}$.  Wilson~\cite{w84} later showed that $n \ge (t+1)(k-t+1)$ was the correct bound.

Stability results for the Erd\H{o}s--Ko--Rado theorem have a long history, beginning with the Hilton--Milner theorem~\cite{hm67}, which resolved the $t = 1$ case.  After much incremental progress, Ahlswede and Khachatrian~\cite{ak96} completely determined the largest non-trivial intersecting hypergraphs for all ranges of parameters.  In our range of interest, $n \ge (t+1)(k-t+1)$, there are two possible largest non-trivial hypergraphs:
\begin{align*}
\cH_1 &= \left\{ F : \card{ F \cap [t+2]} \ge t + 1 \right\}, \textrm{ and } \\
\cH_2 &= \left\{ F : [t] \subset F, F \cap [t+1,k+1] \neq \emptyset \right\} \cup \left\{ [k+1] \setminus \{i\}: 1 \le i \le t \right\}.
\end{align*}

\begin{theorem}[Ahlswede--Khachatrian] \label{thm-ak}
Suppose $n \ge (t+1)(k-t+1)$.  If $k \le 2t + 1$, then the largest non-trivial $t$-intersecting $k$-uniform hypergraph over $[n]$ has size $\card{\cH_1}$.  If $k \ge 2t + 2$, then the largest non-trivial hypergraph has size $\max \left\{ \card{\cH_1}, \card{\cH_2} \right\}$.
\end{theorem}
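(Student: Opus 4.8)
The statement to be established is the Ahlswede--Khachatrian non-trivial intersection theorem, which lies considerably deeper than the extremal facts used so far; the plan is to prove it by compression (``shifting''), together with the pushing--pulling refinement of~\cite{ak96} that is needed to obtain the sharp threshold $n\ge(t+1)(k-t+1)$. \emph{Step 1: reduce to shifted families.} Recall the shift $S_{ij}$ for $i<j$: in a family $\cH\subseteq\binom{[n]}{k}$, replace every edge $F$ with $j\in F$, $i\notin F$ by $(F\setminus\{j\})\cup\{i\}$ unless the latter is already present. The usual arguments show that $S_{ij}$ preserves $k$-uniformity and the $t$-intersecting property and fixes $\card{\cH}$, and that iterating over all pairs terminates in a shifted family of unchanged size. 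The one subtlety for the \emph{non-trivial} version is that a single shift can collapse a non-trivial family onto a star $\{F:[t]\subseteq F\}$; I would handle this by applying shifts one at a time, so that either all intermediate families remain non-trivial and we end with a shifted non-trivial family of the same size, or some non-trivial $\cG$ is sent by one shift $S_{ij}$ to a trivial family. In the latter case $\cG$ is highly constrained --- every edge of $\cG$ missing the common $t$-set of $S_{ij}(\cG)$ must contain $j$ and avoid $i$, while $\cG$ stays $t$-intersecting --- and a direct count shows $\card{\cG}\le\card{\cH_1}$ in our range of $n$. Hence we may assume $\cF$ is shifted and non-trivial.

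\emph{Step 2: locate the candidate extremal families.} For a shifted non-trivial $t$-intersecting $\cF$, fix $F_0\in\cF$ with $[t]\not\subseteq F_0$; shiftedness lets one normalise $F_0$ and then induct on $k-t$ by passing to the link and trace of $\cF$ at an element just above $[t]$, which lowers the uniformity while essentially preserving shiftedness and the $t$- or $(t+1)$-intersecting property. The outcome should be a structure statement valid in the range $n\ge(t+1)(k-t+1)$: up to relabelling, $\cF$ is contained in one of a short explicit list of shifted non-trivial families --- the Frankl families $\cA_i=\{F\in\binom{[n]}{k}:\card{F\cap[t+2i]}\ge t+i\}$ for $i\ge1$ (with $\cA_1=\cH_1$), and the Hilton--Milner-type family $\cH_2$. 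The care here is in proving that the list is exhaustive; this is a refinement of Frankl's analysis of shifted $t$-intersecting families, and is where the combinatorial bookkeeping is densest.

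\emph{Step 3: optimise, and the main obstacle.} The sizes are now routine to write down, with $\card{\cA_i}=\sum_{j\ge t+i}\binom{t+2i}{j}\binom{n-t-2i}{k-j}$ and $\card{\cH_2}=\binom{n-t}{k-t}-\binom{n-k-1}{k-t}+t$. One shows that for $n\ge(t+1)(k-t+1)$ the sequence $(\card{\cA_i})_{i\ge0}$ is non-increasing --- this is exactly where the threshold $(t+1)(k-t+1)$ enters, through the sign of $\card{\cA_{i+1}}-\card{\cA_i}$ --- so the largest non-trivial $\cA_i$ is $\cA_1=\cH_1$, and it remains only to compare $\cH_1$ with $\cH_2$; a short computation gives $\card{\cH_1}\ge\card{\cH_2}$ when $k\le 2t+1$, whereas $\cH_2$ can overtake $\cH_1$ once $k\ge 2t+2$, which is precisely the stated dichotomy. \textbf{The hard part is making Steps 2 and 3 sharp at the boundary:} for ``$n$ sufficiently large with respect to $k$'' both the monotonicity of $(\card{\cA_i})$ and the exhaustiveness of the list of candidates are comfortable, but pushing them all the way down to $n=(t+1)(k-t+1)$ --- where the trivial star, $\cH_1$ and $\cH_2$ can be simultaneously near-extremal --- is exactly what forced Ahlswede and Khachatrian to develop the pushing--pulling method in~\cite{ak96}. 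I would therefore invoke their theorem for the sharp constant rather than reprove it, noting that the slack $\eta_{k,t}$ built into Theorem~\ref{thm-hyp-count} leaves some room in any case.
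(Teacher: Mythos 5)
This statement is not proved in the paper at all: it is attributed to Ahlswede and Khachatrian and cited from~\cite{ak96}, and is used as a black box to supply the stability bound $N_1$. So there is no ``paper's own proof'' to compare against, and the appropriate thing to do is simply cite the result, as the paper does.

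Your sketch is a fair outline of how such theorems are proved (compression to a shifted family, classification of shifted non-trivial $t$-intersecting families, then optimisation among the Frankl families $\cA_i$ and $\cH_2$), but it is not a proof. The crux is the exhaustiveness claim in Step 2 --- that every shifted non-trivial $t$-intersecting family in the range $n\ge(t+1)(k-t+1)$ sits inside one of $\cA_1,\cA_2,\ldots$ or $\cH_2$ --- and you gesture at an induction on $k-t$ via links and traces without carrying it out; this is precisely the ``dense combinatorial bookkeeping'' that constitutes the theorem. Moreover, you explicitly end by saying you would ``invoke their theorem for the sharp constant rather than reprove it,'' which makes the argument circular as a proof of that very theorem. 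Two smaller points: (i) the remark that the slack $\eta_{k,t}$ in Theorem~\ref{thm-hyp-count} ``leaves some room'' is misleading --- that slack is used in the \emph{application} of the stability bound, not in its proof, and a weaker stability estimate would change the $\eta_{k,t}$ one could achieve; (ii) in Step 1, the claim ``a direct count shows $\card{\cG}\le\card{\cH_1}$'' for a family one shift away from trivial is asserted but not shown, and it is not obviously true without further argument at the bottom of the range. The honest conclusion is that this is a deep external result and should simply be cited, which is exactly what the paper does.
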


This theorem provides the value of $N_1$ needed for Lemma~\ref{lem-cond}.  Before we proceed, we evaluate $\card{\cH_1}$ and $\card{\cH_2}$, making use of Pascal's identity for binomial coefficients.
\begin{align}
\card{\cH_1} &= (t + 2) \binom{n-t-2}{k-t-1} + \binom{n-t-2}{k-t-2} = \binom{n-t}{k-t} - \left( 1 - \frac{(t+1)(k-t)}{n - t - 1} \right) \binom{n-t-1}{k-t}. \label{size-H1} \\
\card{\cH_2} &= \binom{n-t}{k-t} - \binom{n-k-1}{k-t} + t. \label{size-H2}
\end{align}

In light of Theorem~\ref{thm-ak}, we have $N_1 \le \max \left\{ \card{\cH_1}, \card{\cH_2} \right\}$, which we estimate by
\begin{align}
N_1 &\le \max \left\{ \card{\cH_1}, \card{\cH_2} \right\} \notag \\
	&= \binom{n-t}{k-t} - \min \left\{ \left( 1 - \frac{(t+1)(k-t)}{n-t-1} \right) \binom{n-t-1}{k-t} , \binom{n-k-1}{k-t} - t \right\} \notag \\
	&\le \binom{n-t}{k-t} - \left( 1 - \frac{(t+1)(k-t)}{n-t-1} \right) \binom{n-k-1}{k-t} + t \le \binom{n-t}{k-t} - \frac{1}{n} \binom{n-k-1}{k-t} + n \label{size-both},
\end{align}
where the last inequality holds for $n \ge (t+1)(k-t+1) + 1$.  We shall also use the following inequality for $a \ge b \ge r$:
\begin{equation} \label{binom-ineq}
\frac{\binom{a}{r}}{\binom{b}{r}} = \prod_{j=0}^{r-1} \frac{a-j}{b-j} \ge \left( \frac{a}{b} \right)^r.
\end{equation}

Finally, to count the number of trivial families, we use Lemma~\ref{lem-union}.  Since each trivial family fixes $t$ elements, there are $T = \binom{n}{t}$ maximal trivial families.  The intersection of any two such families must fix at least $t+1$ elements, and so can have size at most $N_2 = \binom{n-t-1}{k-t-1}$.  With these preliminaries in place, we now prove Theorem~\ref{thm-hyp-count}.

\begin{proof}[Proof of Theorem~\ref{thm-hyp-count}]
We will first prove that, for $n, k$ and $t$ as in the statement of the theorem, almost all $t$-intersecting hypergraphs are trivial.  To this end, we verify that~\eqref{cond-allsizes} of Lemma~\ref{lem-cond} holds.

We start with the case $t = 1$.  The Hilton--Milner theorem states that when $n > 2k$, the largest non-trivial intersecting hypergraph is $\cH_2$, and so $N_1 = \card{\cH_2} = \binom{n-1}{k-1} - \binom{n-k-1}{k-1} + 1$.  Recall that the trivial hypergraphs have size $N_0 = \binom{n-1}{k-1}$.  Finally, since $\binom{n}{k} \le 2^n$, Proposition~\ref{prop-hyp-max} shows that we may use $\log M \le \binom{2k-1}{k-1} n$.

Hence, using~\eqref{size-H2} and~\eqref{binom-ineq}, we have
\[ \log M + N_1 - N_0 \le \binom{2k-1}{k-1} n - \binom{n-k-1}{k-1} + 1 \le \left( n - \left( \frac{n-k-1}{2k-1} \right)^{k-1} \right) \binom{2k-1}{k-1} + 1. \]

For $t = 1$, we have $n \ge (t + 1)(k - t + 1) + \eta_{k,t} = 2k + \eta_{k,1} = 3k + 8 \ln k$.  We may bound
\[ \left( \frac{n-k-1}{2k-1} \right)^{k-1} = \left( \frac{n-k-1}{2k-1} \right)^2 \left( \frac{n-k-1}{2k-1} \right)^{k-3} \ge \frac{n^2 }{16 k^2} \left( 1 + \frac{8 \ln k}{2k-1} \right)^{k-3}. \]

Since $1 + x \ge \mathrm{exp}(6x/11)$ for $x \le 1$, when $k$ is large we have 
\[ k^{-2} \left(1 + \frac{8 \ln k }{2k - 1} \right)^{k-3} \ge k^{-2} \mathrm{exp} \left(  \frac{48 (k-3) \ln k}{22k} \right) \ge k^{-2} \mathrm{exp} (2 \ln k ) = 1. \]
Thus there is some constant $c > 0$ such that $k^{-2} \left( 1 + \frac{8 \ln k}{2k - 1} \right)^{k-3} \ge c$ for all $k$, and thus $\left( \frac{n-k-1}{2k-1} \right)^{k-1} = \Omega(n^2)$.  Hence it follows that $\left( n - \left( \frac{n - k - 1}{2k - 1} \right)^{k-1} \right) \binom{2k-1}{k-1} - 1 \rightarrow - \infty$.

We next handle the case $k - t = 1$.  In this setting, we have $k \le 2t + 1$, and hence by Theorem~\ref{thm-ak}, the largest non-trivial hypergraph has size $N_1 = \card{\cH_1}$.  Using $\binom{n}{k} \le \left( \frac{ne}{k} \right)^k$, we may use $\log M \le k \log \left( \frac{ne}{k} \right) \binom{2(k-t)+1}{k-t}$.  Using~\eqref{size-H1} gives
\begin{align*}
\log M + N_1 - N_0 &\le k \log \left( \frac{ne}{k} \right) \binom{2(k-t) + 1}{k-t} - \left( 1 - \frac{(t+1)(k-t)}{n - t - 1} \right) \binom{n-t-1}{k-t} \\
	&= 3k \log \left( \frac{ne}{k} \right) - (n - 2k) = 3k \log \left( \frac{ne}{k} \right) + 2k - n.
\end{align*}
This expression is increasing in $k$.  Since we are assuming $n \ge (t + 1)(k-t + 1) + \eta_{k,t} = 2k + \eta_{k,k-1} = 20k$, we substitute $k = n/20$ to obtain $\log M + N_1 - N_0 \le \left( 3 \log (20 e) - 18 \right) n / 20 \rightarrow - \infty$, since $3 \log(20e) < 18$.

Similar calculations show that when $t \ge 2$ and $k -t = 2$, $\eta_{k,k-2} = 31$ suffices.  In this setting, we still have $N_1 = \card{\cH_1}$.  Using $\log M \le n \binom{2(k-t)+1}{k-t}$ and $n \ge (t+1)(k-t+1) + \eta_{k,k-2} > 3k$,
\[ \log M + N_1 - N_0 \le \left( 10 - \eta_{k,k-2} / 3 \right) n \rightarrow - \infty. \]

We now consider the remaining cases, when $t \ge 2$ and $k-t \ge 3$.  In this range, the largest non-trivial hypergraph has size $N_1 = \max \{ \card{\cH_1}, \card{\cH_2} \}$.  Using $\binom{n}{k} \le 2^n$, we have $\log M \le n \binom{2(k-t)+1}{k-t} \le 2n \binom{2(k-t)}{k-t}$.  By~\eqref{size-both} and~\eqref{binom-ineq}, and observing that $n - k - 1 \ge t (k-t) + \eta_{k,t}$, we have 
\begin{align}
\log M + N_1 - N_0 &\le 2n \binom{2(k-t)}{k-t} - \frac{1}{n} \binom{n-k-1}{k-t} + n \notag \\
&\le \left( 3n - \frac{1}{n} \left( \frac{n-k-1}{2(k-t)} \right)^{k-t} \right) \binom{2(k-t)}{k-t} \notag \\
&\le \left( 3n - \frac{n^2}{64 (k-t)^3} \left( \frac{t(k-t) + \eta_{k,t}}{2(k-t)} \right)^{k-t-3} \right) \binom{2(k-t)}{k-t}. \label{final-bound}
\end{align}

If $t = 2$, then $\eta_{k,t} = 12 \ln k$, and $\frac{t(k-t) + \eta_{k,t}}{2(k-t)} = 1 + \frac{6 \ln k}{k-2}$.  Using $1 + x \ge \mathrm{exp}(6x/11)$ again, we find that for large $k$, 
\[ (k-2)^{-3} \left( 1 + \frac{6 \ln k}{k-2} \right)^{k-5} \ge (k-2)^{-3} \mathrm{exp} \left( \frac{36 (k-5) \ln k }{11k} \right) \ge k^{-3} \mathrm{exp} (3 \ln k) = 1. \]
It follows that there is some constant $c > 0$ such that $(k-2)^{-3} \left( \frac{ 2(k-2) + \eta_{k,2}}{2(k-2)} \right)^{k-5} \ge c$ for all $k$.

If instead $t \ge 3$, then $(k-t)^{-3} \left( \frac{t(k-t) + \eta_{k,t}}{2(k-t)} \right)^{k-t-3} > (k-t)^{-3} (\frac32)^{k-t-3} \rightarrow \infty$ as $k-t \rightarrow \infty$, and thus there is some $c > 0$ such that $(k-t)^{-3} \left( \frac{t(k-t) + \eta_{k,t}}{2(k-t)} \right)^{k-t-3} \ge c$ for all $k > t$.  Hence, in either case, $\frac{n^2}{64 (k-t)^3} \left( \frac{t(k-t) + \eta_{k,t}}{2(k-t)} \right)^{k-t-3} = \Omega(n^2)$, and so from~\eqref{final-bound} it follows that $\log M + N_1 - N_0 \rightarrow - \infty$.

Thus our choice of $\eta_{k,t}$ ensures that for all $k > t$ we have $\log M + N_1 - N_0 \rightarrow - \infty$, satisfying~\eqref{cond-allsizes} of Lemma~\ref{lem-cond}, thus showing that almost all $t$-intersecting $k$-uniform hypergraphs are trivial.  To complete the proof of Theorem~\ref{thm-hyp-count}, we need only count the number of trivial hypergraphs.  By Lemma~\ref{lem-union}, it suffices to verify~\eqref{cond-union}.  We have
\[ 2 \log T + N_2 - N_0 = 2 \log \binom{n}{t} + \binom{n-t-1}{k-t-1} - \binom{n-t}{k-t} \le 2t \log \left( \frac{ne}{t} \right) - \binom{n-t-1}{k-t} \rightarrow - \infty \]
for $k-t \ge 2$ or $k -t = 1$ and $n \ge 20t$.  It follows that the number of $t$-intersecting $k$-uniform hypergraphs on $[n]$ is $\left( \binom{n}{t} + o(1) \right) 2^{\binom{n-t}{k-t}}$, as claimed.
\end{proof}

We conclude this section with the proof of Theorem~\ref{thm-hyp-random}, showing that even in sparse random hypergraphs, when the edge probability is as given in~\eqref{eqn-prob-bound} the largest intersecting subhypergraphs are trivial.

\begin{proof} [Proof of Theorem~\ref{thm-hyp-random}]
The proof follows that of Theorem~\ref{thm-perm-random}.  Let $\cT$ denote a fixed maximal trivial hypergraph, and let $(\cT)_p = \cT \cap \cH^k(n,p)$ be those edges of $\cT$ selected in $\cH^k(n,p)$.  Let $\cF_1, \cF_2, \hdots, \cF_M$ be the maximal non-trivial hypergraphs, where by Proposition~\ref{prop-hyp-max} we have $M < \binom{n}{k}^{\binom{2k-1}{k-1}} < 2^{k\log \left( \frac{ne}{k} \right) \binom{2k-1}{k-1}}$, and let $(\cF_i)_p = \cF_i \cap \cH^k(n,p)$ denote the corresponding random subhypergraphs.

Observe that $\card{\cT} = N_0 = \binom{n-1}{k-1}$, while by the Hilton--Milner theorem~\cite{hm67}, $\card{\cF_i} \le N_1 = \binom{n-1}{k-1} - \binom{n-k-1}{k-1} + 1$.  Setting $\tau = p\binom{n-k-1}{k-1}/ 3$, define events $E_0 = \left\{ \card{(\cT)_p} \le p N_0 - \tau \right\}$ and $E_i = \left\{ \card{(\cF_i)_p} \ge p N_1 + \tau \right\}$ for $1 \le i \le M$.  By our choice of $\tau$, if none of the events $\{ E_i \}_{i=0}^M$ occur then $\card{(\cT)_p} > \max_i \left\{ \card{ ( \cF_i)_p } \right\}$, and so the largest intersecting subhypergraphs in $\cH^k(n,p)$ are trivial.

Applying Theorem~\ref{thm-hoeffding}, we find
\[ \bP( E_0 ) \le \mathrm{exp}\left( - \frac{\tau^2}{2 p N_0} \right) \quad \textrm{and} \quad \bP( E_i ) \le \mathrm{exp} \left( - \frac{\tau^2}{2 p N_1} \right) \le \mathrm{exp} \left( - \frac{\tau^2}{2 p N_0} \right). \]

Hence, by the union bound,
\[ \bP \left( \cup_{i=0}^M E_i \right) \le (M+1) \mathrm{exp} \left( - \frac{\tau^2}{2 p N_0} \right) \le \left( 2^{k\log \left( \frac{ne}{k} \right) \binom{2k-1}{k-1} } + 1 \right) \mathrm{exp} \left( - \frac{p \binom{n-k-1}{k-1}^2 }{18 \binom{n-1}{k-1}} \right) \rightarrow 0 \]
when $p \ge p_0(n,k) = \frac{9 n \log \left( \frac{ne}{k} \right) \binom{2k}{k} \binom{n}{k}}{\binom{n-k}{k}^2 } \ge \frac{18k \log \left( \frac{ne}{k} \right) \binom{2k-1}{k-1} \binom{n-1}{k-1} }{ \binom{n-k-1}{k-1}^2 }$, giving the bound in~\eqref{eqn-prob-bound}.
\end{proof}

As proven in~\cite{bbm}, when $k \gg \sqrt{n \log \log n}$ and $\frac{\log n}{\binom{n-1}{k}} \ll p \ll \frac{e^{k^2 / 2n}}{\binom{n}{k}}$, a simple first moment argument shows that the largest intersecting subhypergraph of $\cH^k(n,p)$ is non-trivial with high probability.  This holds for $p$ considerably smaller than in~\eqref{eqn-prob-bound}, and it would be very interesting to determine the threshold at which trivial hypergraphs become the largest intersecting subhypergraphs of $\cH^k(n,p)$.

\section{Hypergraphs of large uniformity} \label{sec-containers}

Although Theorem~\ref{thm-hyp-count} provides very sharp results, it is somewhat incomplete in the case $t=1$, as we require $n \ge 3k + 8 \ln k$ instead of the Erd\H{o}s--Ko--Rado threshold $n \ge 2k+1$.  In this section we prove Theorem~\ref{thm-hyp-containers}, which fills in the gap with a slightly weaker result, providing the asymptotics of the \emph{logarithm} of the number of intersecting hypergraphs.

We combine spectral methods with the theory of graph containers\footnote{For the general theory of containers, we refer the reader to the papers of Balogh, Morris and Samotij~\cite{bms} and Saxton and Thomason~\cite{st}.} to prove this theorem.  Such an approach has previously been used for other enumerative problems in combinatorics; see, for instance, the work of Sapozhenko~\cite{sap01} or Alon, Balogh, Morris and Samotij~\cite{abms14}.  To use these methods in our setting, we exploit the connection between intersecting hypergraphs and independent sets in Kneser graphs.

The Kneser graph $KG(n,k)$ is a graph with vertex set $\binom{[n]}{k}$ and an edge between vertices $F_1, F_2 \in \binom{[n]}{k}$ if and only if $F_1 \cap F_2 = \emptyset$.  This graph has $N = \binom{n}{k}$ vertices and is $D$-regular, where $D = \binom{n-k}{k}$.  Moreover, subsets of vertices of $KG(n,k)$ correspond to $k$-uniform hypergraphs on $[n]$, and independent sets correspond directly to intersecting hypergraphs.  Our problem thus reduces to counting the number of independent sets in $KG(n,k)$.

The following graph containers theorem, appearing the form below in~\cite{klrs13}, provides a method to bound the number of independent sets in a graph.

\begin{theorem}[Kohayakawa--Lee--R\"odl--Samotij]\label{thm-containers}
Let $G$ be a graph on $N$ vertices, let $R$ and $\ell$ be integers, and let $\beta > 0$ be a positive real.  Then, provided
\begin{equation} \label{eqn-Rbound}
e^{- \beta \ell}N \le R,
\end{equation}
and, for every subset $S \subset V(G)$ of at least $R$ vertices, we have 
\begin{equation} \label{eqn-density}
e(S) \ge \beta \binom{\card{S}}{2},
\end{equation}
there is a collection of sets $C_i \subset V(G)$, $1 \le i \le \binom{N}{\ell}$, such that $\card{C_i} \le R + \ell$ for every $i$ and, for every independent set $I \subset V(G)$, there is some $i$ satisfying $I \subset C_i$.
\end{theorem}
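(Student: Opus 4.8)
The plan is to prove Theorem~\ref{thm-containers} by the \emph{online} (Kleitman--Winston type) container algorithm: from each independent set $I$ we extract a small \emph{fingerprint} $A(I) \subseteq I$, and show that $I$ is pinned down, up to a small container, by $A(I)$ and $G$ alone. Fix an arbitrary linear order $\prec$ on $V(G)$. Given an independent set $I$, run a process maintaining a fingerprint $A$ (initially $\emptyset$) and an \emph{available set} $\mathcal{A} \subseteq V(G)$ (initially $V(G)$), with invariants $A \cap \mathcal{A} = \emptyset$ and $I \subseteq A \cup \mathcal{A}$. At a step with $|\mathcal{A}| > R$: if $\mathcal{A} \cap I = \emptyset$, set $\mathcal{A} := \emptyset$ and halt (every vertex of $\mathcal{A}$ lies outside $I$); otherwise list $\mathcal{A}$ as $u_1, u_2, \dots$ in order of decreasing degree in $G[\mathcal{A}]$, ties broken by $\prec$, let $r$ be least with $u_r \in I$, append $u_r$ to $A$, and delete $\{u_1, \dots, u_r\} \cup N_G(u_r)$ from $\mathcal{A}$. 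As $u_1, \dots, u_{r-1} \notin I$, as $u_r$ is merely moved into $A$, and as $N_G(u_r) \cap I = \emptyset$ by independence, both invariants survive; the process halts once $|\mathcal{A}| \le R$ (or earlier, via the emptying branch).

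The container of $I$ is $C := A \cup \mathcal{A}$ at the halt, so $|C| \le |A| + R$. The key point is that $C$ depends only on the unordered set $A$ (and $G$). Indeed, the forward run can be replayed from $A$: at each step the element of $A$ absorbed next is precisely the one appearing earliest in the decreasing-degree order of the current $\mathcal{A}$ --- it was the \emph{first} vertex of $I$ in that order, while the remaining fingerprint elements also lie in $I \cap \mathcal{A}$ at that time and hence appear later. So from $A$ we recover the whole sequence of deletions and the terminal available set $\mathcal{A}^\ast$; the reconstructed container is $A \cup \mathcal{A}^\ast$ if $|\mathcal{A}^\ast| \le R$, and $A$ otherwise (if $|\mathcal{A}^\ast| > R$, the forward run must have reached $\mathcal{A}^\ast$ with $\mathcal{A}^\ast \cap I = \emptyset$ and then emptied it, so $I \subseteq A$). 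Hence the number of containers is at most the number of fingerprints; once we know $|A| \le \ell$ always, this is at most $\sum_{s=0}^{\ell}\binom{N}{s}$, which we reduce to $\binom{N}{\ell}$ by padding each fingerprint with the $\prec$-least vertices of its terminal $\mathcal{A}^\ast$ --- they already belong to the container, so padding changes nothing.

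It remains to prove $|A(I)| \le \ell$, whence $|C| \le R + \ell$. At an absorb step with $|\mathcal{A}| = s > R$, condition~\eqref{eqn-density} gives $e(G[\mathcal{A}]) \ge \beta\binom{s}{2}$, so the degrees $d_1 \ge \dots \ge d_s$ of $G[\mathcal{A}]$ sum to at least $\beta s(s-1)$; moreover~\eqref{eqn-density} applies to \emph{every} subset of size $\ge R$, so no large part of $\mathcal{A}$ is edge-sparse and a positive proportion of the $d_i$ must be of order $\beta s$. The step removes $\{u_1,\dots,u_r\}\cup N_G(u_r)$, of size at least $\max(r, d_r)$; choosing the cut-off of the degree-prefix at the right place and combining the degree-sum bound with the preceding concentration estimate shows this removed set has size at least $\beta s$, so $|\mathcal{A}|$ shrinks by a factor at most $1-\beta \le e^{-\beta}$. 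Iterating, after $\ell$ absorb steps $|\mathcal{A}| \le e^{-\beta\ell}N \le R$ by~\eqref{eqn-Rbound}, so the process has stopped, giving $|A(I)| \le \ell$.

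Once the algorithm is set up as above, the containment and reconstruction claims are essentially bookkeeping; the real content --- and the main obstacle --- is the last paragraph's quantitative step, namely extracting the sharp per-step contraction factor $e^{-\beta}$. The crude estimate (remove $\ge \max(r,d_r)$ vertices, and $d_r \gtrsim \beta s - r$ since each degree is $\le s-1$ while the degrees sum to $\ge \beta s(s-1)$) yields only $\ge \tfrac12\beta s$, which would inflate $\ell$ by a constant factor and fail to match~\eqref{eqn-Rbound}; squeezing out the factor $\beta s$ needs a genuine use of the hypothesis that \emph{all} large subsets of $\mathcal{A}$, not merely $\mathcal{A}$ itself, satisfy~\eqref{eqn-density}, together with a convexity comparison between the size $r$ of the degree-prefix and $|N_G(u_r)|$ at the optimal truncation point.
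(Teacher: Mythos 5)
The paper does not prove Theorem~\ref{thm-containers}; it cites it from~\cite{klrs13}. So I am assessing your argument on its own terms.

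Your overall architecture (a Kleitman--Winston algorithm that extracts a fingerprint $A(I)\subseteq I$, replayability of the run from $A$ alone, and padding to get exactly $\binom{N}{\ell}$ containers) is correct and is indeed the route taken in~\cite{klrs13}. The reconstruction argument you sketch -- at each step the next absorbed vertex is the first element of $A$ in the current degree order, and padding by $\prec$-least elements of the terminal available set cannot collide with genuine fingerprint vertices -- holds up once one also notes that any padding vertex, being in the terminal available set, survives every removal and hence never precedes the true fingerprint vertex in the order. The gap is exactly where you flag it, and I do not think your proposed fix closes it. In your algorithm a single macro-step removes the whole prefix $\{u_1,\dots,u_r\}\cup N_G(u_r)$ without re-sorting. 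The removed set has size exactly $r+d_r(S)$, where $S=\{u_r,\dots,u_s\}$ and $d_r(S)$ is the degree of $u_r$ in $G[S]$. Applying~\eqref{eqn-density} to the tail $S$ only shows that \emph{some} vertex of $S$ has $G[S]$-degree at least $\beta(s-r)$; since the sort was by $G[\mathcal{A}]$-degree, that vertex need not be $u_r$, and the best one can deduce is $d_r(G[\mathcal{A}])\geq\beta(s-r)$, hence removal $\geq\max\bigl(r,\,\beta(s-r)+1\bigr)$. Minimizing over $r$ gives only roughly $\frac{\beta s}{1+\beta}$, so the per-step contraction is at best $\frac{1}{1+\beta}$, not $1-\beta\leq e^{-\beta}$. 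This strictly overshoots the budget set by~\eqref{eqn-Rbound}: you would need $\ell'=\ln(N/R)/\ln(1+\beta)>\ell$. No ``convexity comparison at the optimal truncation point'' repairs this for the batch-prefix rule, because the obstruction is structural -- $u_r$ is maximal in the wrong graph.

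The correct algorithm re-sorts after \emph{every single-vertex deletion}. Concretely: while $|\mathcal{A}|>R$ and $\mathcal{A}\cap I\neq\emptyset$, pick $v$ to be the maximum-degree vertex of $G[\mathcal{A}]$ (ties by $\prec$); if $v\notin I$, delete $v$ alone; if $v\in I$, append $v$ to $A$ and delete $\{v\}\cup N_G(v)$. At a fingerprint step the ambient set has some size $s'>R$, so by~\eqref{eqn-density} applied to that set, $d_v(G[\mathcal{A}])\geq\beta(s'-1)$ and the step leaves at most $(1-\beta)(s'-1)<(1-\beta)s'$ vertices. Between consecutive fingerprint steps $|\mathcal{A}|$ only shrinks, so if $t_i$ denotes $|\mathcal{A}|$ right after the $i$-th fingerprint step then $t_i<(1-\beta)t_{i-1}$, whence $t_q<(1-\beta)^qN\leq e^{-\beta q}N\leq R$ and at most $q=\ell$ fingerprint steps occur. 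Your replay argument applies verbatim to this version (at each micro-step, test whether the max-degree vertex lies in $A$). So the fix is simply to delete one vertex at a time with re-sorting rather than purging the whole prefix in a block; the prefix rule as you wrote it genuinely loses a factor in the exponent.
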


The supersaturation condition of~\eqref{eqn-density} in Theorem~\ref{thm-containers} requires large vertex subsets to induce subgraphs of positive density.  We use spectral methods to show that the Kneser graph satisfies this property; a similar approach was used by Gauy, H\`an and Oliveira in~\cite{RSA}.  The expander-mixing lemma, due to Alon and Chung~\cite{ac88}, relates the eigenvalues of a graph to its distribution of edges.

\begin{theorem}[Alon--Chung]\label{thm-em-lemma}
Let $G$ be a $D$-regular graph on $N$ vertices, and let $\lambda$ be its minimum eigenvalue.  Then for all $S \subseteq V(G),$
\[ e(G[S]) \geq \frac{D}{2N}|S|^2 + \frac{\lambda}{2N}|S|\left(N - |S|\right). \]
\end{theorem}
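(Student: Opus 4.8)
The final statement is the Alon--Chung expander-mixing lemma (one-sided form). The plan is to derive the inequality from a spectral decomposition of the indicator vector of $S$, using crucially that for a $D$-regular graph the all-ones vector $\mathbf{1}$ is an eigenvector with eigenvalue $D$. Write $A$ for the adjacency matrix of $G$ and $\chi_S\in\{0,1\}^{V(G)}$ for the characteristic vector of $S$, so that $\chi_S^{\mathsf{T}}A\chi_S = \sum_{u,v\in S}A_{uv} = 2\,e(G[S])$, where the factor $2$ records that each edge of $G[S]$ is counted once in each order.

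First I would decompose $\chi_S = \frac{|S|}{N}\mathbf{1} + \mathbf{w}$ with $\mathbf{w}\perp\mathbf{1}$. Since $\|\chi_S\|^2 = |S|$, the Pythagorean identity gives $\|\mathbf{w}\|^2 = |S| - |S|^2/N = |S|(N-|S|)/N$. Expanding the quadratic form and using $A\mathbf{1} = D\mathbf{1}$ together with $\mathbf{1}^{\mathsf{T}}\mathbf{w} = 0$, the two cross terms vanish, leaving
\[ 2\,e(G[S]) \;=\; \chi_S^{\mathsf{T}}A\chi_S \;=\; \frac{|S|^2}{N^2}\cdot\mathbf{1}^{\mathsf{T}}A\mathbf{1} + \mathbf{w}^{\mathsf{T}}A\mathbf{w} \;=\; \frac{D|S|^2}{N} + \mathbf{w}^{\mathsf{T}}A\mathbf{w}, \]
using $\mathbf{1}^{\mathsf{T}}A\mathbf{1} = 2|E(G)| = ND$.

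The key step is then to lower bound $\mathbf{w}^{\mathsf{T}}A\mathbf{w}$. Since $A$ is symmetric with minimum eigenvalue $\lambda$, the Rayleigh-quotient bound (Courant--Fischer) gives $\mathbf{w}^{\mathsf{T}}A\mathbf{w} \ge \lambda\|\mathbf{w}\|^2 = \lambda\,|S|(N-|S|)/N$. Substituting this into the displayed identity and dividing by $2$ yields exactly
\[ e(G[S]) \;\ge\; \frac{D}{2N}|S|^2 + \frac{\lambda}{2N}|S|(N-|S|), \]
as required.

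I do not expect a genuine obstacle here; the argument is a short spectral computation. The only points that need care are keeping track of the factor $2$ from double-counting edges, applying the Rayleigh bound to the symmetric matrix $A$ itself (not a normalized variant), and noting that while the orthogonal decomposition is what isolates the main term $\frac{D}{2N}|S|^2$ cleanly, the Rayleigh inequality $\mathbf{w}^{\mathsf{T}}A\mathbf{w}\ge\lambda\|\mathbf{w}\|^2$ is valid for every vector, so no extra hypotheses on $\mathbf{w}$ are invoked. Since $\lambda$ is typically negative for the graphs of interest (in particular for the Kneser graph $KG(n,k)$ with $n\ge 2k$), the term $\frac{\lambda}{2N}|S|(N-|S|)$ functions as a negative correction to the "expected" edge count $\frac{D}{2N}|S|^2$, which is precisely the form in which the lemma will later be combined with the container theorem.
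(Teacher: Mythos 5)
Your proof is correct and is the standard spectral argument: project $\chi_S$ onto the all-ones direction and its orthogonal complement, use $D$-regularity to evaluate the first piece exactly, and bound the second piece by the Rayleigh quotient. The paper itself does not prove this lemma but simply cites Alon and Chung, so there is no alternative argument in the text to compare against; your derivation matches the usual proof found in the literature.
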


To employ this result, we require the spectrum of the Kneser graph, which was determined in a seminal paper of Lov\'asz~\cite{lov79}.  In particular, the minimum eigenvalue of the Kneser graph $KG(n,k)$ is $\lambda = -\binom{n-k-1}{k-1} = -\frac{k}{n-k}D$.  Combined with Theorem~\ref{thm-em-lemma}, this gives the following supersaturation bound.

\begin{prop} \label{prop-supersaturation}
Given $\ep>0$, any set $S$ of at least $\left(1 + \ep \right) \binom{n-1}{k-1}$ vertices in the Kneser graph $KG(n,k)$ induces at least $\left( 1 - \frac{1}{1 + \ep} \right) \frac{Dn}{N(n-k)} \binom{\card{S}}{2}$ edges.
\end{prop}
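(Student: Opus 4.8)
The plan is to read off Proposition~\ref{prop-supersaturation} directly from the expander-mixing lemma, using the spectral information already assembled: the Kneser graph $KG(n,k)$ has $N=\binom{n}{k}$ vertices, is $D$-regular with $D=\binom{n-k}{k}$, and by Lov\'asz's theorem has minimum eigenvalue $\lambda=-\binom{n-k-1}{k-1}=-\frac{k}{n-k}D$. First I would plug these three quantities into Theorem~\ref{thm-em-lemma} to obtain, for every $S\subseteq\binom{[n]}{k}$,
\[ e(KG(n,k)[S]) \ge \frac{D}{2N}|S|^2 - \frac{kD}{2N(n-k)}|S|\bigl(N-|S|\bigr). \]

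The next step is a routine algebraic simplification. Factoring out $\frac{D|S|}{2N}$, the bracketed term becomes $|S|\bigl(1+\frac{k}{n-k}\bigr)-\frac{k}{n-k}N$; using $1+\frac{k}{n-k}=\frac{n}{n-k}$ together with the identity $kN=k\binom{n}{k}=n\binom{n-1}{k-1}$, this collapses to $\frac{n}{n-k}\bigl(|S|-\binom{n-1}{k-1}\bigr)$, so that
\[ e(KG(n,k)[S]) \ge \frac{Dn|S|}{2N(n-k)}\Bigl(|S|-\tbinom{n-1}{k-1}\Bigr). \]
Finally I would invoke the hypothesis $|S|\ge(1+\ep)\binom{n-1}{k-1}$, which yields $\binom{n-1}{k-1}\le\frac{|S|}{1+\ep}$ and hence $|S|-\binom{n-1}{k-1}\ge\bigl(1-\frac{1}{1+\ep}\bigr)|S|$; substituting this and bounding $\frac{|S|^2}{2}\ge\binom{|S|}{2}$ gives precisely the claimed inequality.

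There is no real obstacle here: the proposition is a clean corollary of Theorem~\ref{thm-em-lemma} once the Lov\'asz spectrum is in hand, and the only thing to be careful about is bookkeeping — rewriting the eigenvalue in the convenient ratio form $\lambda=-\frac{k}{n-k}D$ and carrying out the cancellations without a sign slip. If anything needs a remark, it is merely noting why $kN=n\binom{n-1}{k-1}$, which is the standard absorption identity $k\binom{n}{k}=n\binom{n-1}{k-1}$.
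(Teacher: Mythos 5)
Your proof is correct and follows essentially the same route as the paper: both apply the Alon--Chung expander-mixing lemma with Lov\'asz's eigenvalue $\lambda=-\frac{k}{n-k}D$ and then use the hypothesis $|S|\ge(1+\ep)\binom{n-1}{k-1}$ to convert the resulting quadratic lower bound into the stated density form. The only difference is cosmetic: the paper rewrites the expander-mixing bound directly in terms of $\binom{|S|}{2}$ (using the same hypothesis on $|S|$ implicitly to justify that step), whereas you simplify to $\frac{Dn|S|}{2N(n-k)}\bigl(|S|-\binom{n-1}{k-1}\bigr)$ first and pass from $|S|^2/2$ to $\binom{|S|}{2}$ at the end — both are fine.
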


\begin{proof}
Given a vertex set $S$ with $\card{S} \ge \left(1 + \ep \right) \binom{n-1}{k-1} = \left(1 + \ep \right) \frac{kN}{n}$, we apply Theorem~\ref{thm-em-lemma} and the fact that $\lambda = - \frac{k}{n-k} D$ to find
\begin{align*}
e(G[S]) &\geq \frac{D}{2N}|S|^2 + \frac{\lambda}{2N}|S|\left(N - |S|\right) \ge \left( \frac{D - \lambda}{N} + \frac{\lambda}{\card{S}} \right) \binom{\card{S}}{2} \geq \left( 1 - \frac{1}{1 + \ep} \right) \frac{Dn}{N(n-k)} \binom{\card{S}}{2}.
\end{align*}
\end{proof}

Having established supersaturation, we may now apply Theorem~\ref{thm-containers} to find a small set of containers of independent sets in the Kneser graph, from which we shall derive Theorem~\ref{thm-hyp-containers}.

\begin{prop} \label{prop-kneser-containers}
For $\ep > 0$ and $2 \le k \le \frac{n-1}{2}$, let $R = \left( 1 + \ep \right) \binom{n-1}{k-1}$ and
\[ \ell = \frac{1 + \ep}{\ep} \cdot \frac{(n-k) \binom{n}{k}}{n\binom{n-k}{k}} \ln \left( \frac{n}{(1 + \ep) k} \right). \]
Then there exist $k$-uniform hypergraphs $\cF_i$ over $[n]$, $1 \le i \le \binom{\binom{n}{k}}{\ell}$, each of size at most $R + \ell$, such that every intersecting $k$-uniform hypergraph $\cF$ over $[n]$ is a subhypergraph of $\cF_i$ for some $i$.
\end{prop}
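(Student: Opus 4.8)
The plan is to apply the graph containers theorem (Theorem~\ref{thm-containers}) directly to the Kneser graph $KG(n,k)$, feeding it the supersaturation estimate of Proposition~\ref{prop-supersaturation}. Recall that vertex subsets of $KG(n,k)$ are exactly the $k$-uniform hypergraphs on $[n]$, and that such a hypergraph is intersecting if and only if the corresponding vertex set is independent. Hence a family of containers for the independent sets of $KG(n,k)$ is precisely a family of hypergraphs with the covering property asserted in the proposition. Throughout I write $N = \binom{n}{k}$ and $D = \binom{n-k}{k}$ for the number of vertices and the common degree of $KG(n,k)$.

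First I would fix the parameters. Take $R = (1+\ep)\binom{n-1}{k-1}$ and set $\beta = \left(1 - \frac{1}{1+\ep}\right)\frac{Dn}{N(n-k)} = \frac{\ep}{1+\ep}\cdot\frac{Dn}{N(n-k)}$. With this choice, Proposition~\ref{prop-supersaturation} says precisely that every vertex set $S$ with $\card{S} \ge R$ induces at least $\beta\binom{\card{S}}{2}$ edges, which is exactly condition~\eqref{eqn-density}. It then remains to check condition~\eqref{eqn-Rbound}, i.e. $e^{-\beta\ell}N \le R$, equivalently $\beta\ell \ge \ln(N/R)$. Using the identity $\binom{n}{k} = \frac{n}{k}\binom{n-1}{k-1}$ one gets $N/R = \frac{n}{(1+\ep)k}$, so the requirement is $\beta\ell \ge \ln\!\left(\frac{n}{(1+\ep)k}\right)$; substituting the value of $\beta$, this becomes $\ell \ge \frac{1+\ep}{\ep}\cdot\frac{N(n-k)}{Dn}\ln\!\left(\frac{n}{(1+\ep)k}\right)$, which is exactly the $\ell$ in the statement (with equality, after substituting $N = \binom{n}{k}$, $D = \binom{n-k}{k}$). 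In the range of interest we have $n > (1+\ep)k$ so the logarithm is positive and $\ell > 0$; if instead $N \le R$ then~\eqref{eqn-Rbound} is trivially satisfied and the single container $V(KG(n,k))$ already suffices.

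Both hypotheses of Theorem~\ref{thm-containers} now hold, so it yields sets $C_i \subset V(KG(n,k))$, $1 \le i \le \binom{N}{\ell}$, each of size at most $R+\ell$, with every independent set contained in some $C_i$; reading each $C_i$ as a $k$-uniform hypergraph $\cF_i$ on $[n]$ gives the proposition. One technical wrinkle is that Theorem~\ref{thm-containers} wants $R$ and $\ell$ to be integers, so strictly one applies it with $\lceil R\rceil$ and $\lceil \ell\rceil$: enlarging $R$ only weakens the supersaturation hypothesis, and enlarging $\ell$ only weakens~\eqref{eqn-Rbound}, so both conditions persist, and the rounding changes the container count and sizes only in lower-order terms, which is harmless for the application to Theorem~\ref{thm-hyp-containers}. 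As for the main obstacle: there really is no deep step here — all the work sits in Proposition~\ref{prop-supersaturation} (hence in Lov\'asz's eigenvalue computation and the expander-mixing lemma) — and the only thing to get right is the bookkeeping: pinning down $\beta$, evaluating $N/R = n/k$ via $\binom{n}{k}/\binom{n-1}{k-1}=n/k$, and matching the resulting threshold for $\ell$ to the expression in the statement.
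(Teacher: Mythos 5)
Your proof is correct and follows essentially the same route as the paper: apply Theorem~\ref{thm-containers} to $KG(n,k)$ with $\beta = \left(1-\frac{1}{1+\ep}\right)\frac{Dn}{N(n-k)}$ from Proposition~\ref{prop-supersaturation}, then choose $\ell = \frac{1}{\beta}\ln(N/R)$ so that~\eqref{eqn-Rbound} holds with equality, and translate containers for independent sets back into covering hypergraphs. Your remarks about integer rounding and the degenerate case $N \le R$ are harmless technical additions (and note a small slip in your closing sentence, where you write $N/R = n/k$ rather than the $N/R = n/((1+\ep)k)$ you correctly used earlier).
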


\begin{proof}
We apply Theorem~\ref{thm-containers} to the Kneser graph $KG(n,k)$.  By Proposition~\ref{prop-supersaturation}, condition~\eqref{eqn-density} is satisfied by taking 
\[ \beta = \left( 1 - \frac{1}{1+\ep} \right) \frac{Dn}{N(n-k)}, \]
where $D = \binom{n-k}{k}$ and $N = \binom{n}{k}$.  In order to satisfy \eqref{eqn-Rbound}, we take
\[ \ell = \frac{1}{\beta} \ln \left( \frac{N}{R} \right) = \frac{1}{\beta} \ln \left( \frac{n}{(1 + \ep) k} \right)=\frac{1 + \ep}{\ep} \cdot \frac{(n-k) \binom{n}{k}}{n\binom{n-k}{k}} \ln \left( \frac{n}{(1 + \ep) k} \right). \]
Applying Theorem~\ref{thm-containers}, the result follows by taking $\cF_i$ to be the hypergraph with edges $C_i \subset \binom{[n]}{k}$, since every intersecting hypergraph is an independent set of $KG(n,k)$.
\end{proof}

We now derive Theorem~\ref{thm-hyp-containers}.
 
\begin{proof}[Proof of Theorem~\ref{thm-hyp-containers}]
Since there is an intersecting hypergraph of size $\binom{n-1}{k-1}$, and each of its subhypergraphs is also intersecting, we have a lower bound $\log I(n,k) \ge \binom{n-1}{k-1}$.  We therefore need to show that $\log I(n,k) \le \left(1 + o(1) \right) \binom{n-1}{k-1}$.  Using Proposition~\ref{prop-kneser-containers}, we will show that for any small $\ep > 0$, $\log I(n,k) \le (1 + 2\ep) \binom{n-1}{k-1}$, provided $n \ge 2k+1$ is sufficiently large with respect to $\ep$.

We know that every intersecting hypergraph is contained in one of $\binom{N}{\ell}$ containers, each of size at most $R + \ell$, where $R$ and $\ell$ are as in the statement of the proposition.  By a simple union bound, the total number of intersecting hypergraphs is at most $\binom{N}{\ell} 2^{R + \ell}$.  Therefore, since $N = \binom{n}{k}$,
\[ \log I(n,k) \le R + \ell + \ell \log \left( \frac{ N e }{\ell} \right) = R + \ell \log \left( \frac{2e \binom{n}{k}}{\ell} \right). \]

Because $R = \left( 1 + \ep \right) \binom{n-1}{k-1}$, it is enough to show that $\ell \log \left( \frac{2e \binom{n}{k}}{\ell} \right) \le \ep \binom{n-1}{k-1}$.  We have
\[ \ell = \frac{(1 + \ep) (n-k) \ln \frac{n}{(1 + \ep)k}}{\ep k \binom{n-k}{k} } \binom{n-1}{k-1} \le \frac{2 \frac{n}{k} \ln \frac{n}{k} }{\ep \binom{n-k}{k}} \binom{n-1}{k-1}, \]
and, provided $\ep < \frac{1}{20}$,
\[ \log \left( \frac{2e \binom{n}{k}}{\ell} \right) = \log \left( \frac{2 \ep e n}{(1 + \ep) (n-k) \ln \frac{n}{(1+\ep)k}} \cdot \binom{n-k}{k} \right) \le \log \binom{n-k}{k}. \]

Hence it suffices to have $2 \frac{n}{k} \ln \frac{n}{k} \le \ep^2 \binom{n-k}{k} / \log \binom{n-k}{k}$.  If $n = 2k+1$, the left-hand side is constant, while the right-hand side is $\Omega( n / \log n )$.  If $n \ge 2k +2$, the left-hand side is $O(n \log n)$, while the right-hand side is $\Omega ( n^2 / \log n)$, and thus the inequality holds for large enough $n$.

Letting $\ep \rightarrow 0$, we have $\log I(n,k) \le \left( 1 + o(1) \right) \binom{n-1}{k-1}$, as desired.
 \end{proof}

We conclude this section by observing that the $n \ge 2k+1$ bound in Theorem~\ref{thm-hyp-containers} is best possible.  When $n = 2k$, the $k$-sets in $[n]$ come in $\frac12 \binom{n}{k} = \binom{n-1}{k-1}$ complementary pairs, and a hypergraph is intersecting if and only if it does not contain both edges from a single pair.  We thus have $I(n,k) = 3^{\binom{n-1}{k-1}}$ when $n = 2k$.  For $n < 2k$, the complete hypergraph $\binom{[n]}{k}$ is itself intersecting, and thus $I(n,k) = 2^{\binom{n}{k}}$.

\section{Vector spaces}\label{sec-vs}

In this section we prove Theorem~\ref{thm-vec}, showing that almost all intersecting families of subspaces of a finite vector space are trivial.  We begin, as always, with a bound on the number of maximal families.

\begin{prop} \label{prop-vec-max}
The number of maximal intersecting families of $k$-dimensional subspaces of $\bF_q^n$ is at most
\[ \sum_{i=0}^{\binom{2k - 1}{k-1}} \binom{ {n \brack k}_q }{i} \le {n \brack k}_q^{\binom{2k-1}{k-1}}. \]
\end{prop}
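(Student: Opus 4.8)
The plan is to mimic the proof of Proposition~\ref{prop-maxint} almost verbatim, replacing sets by subspaces and the emptiness condition by the condition of trivial intersection. Given a maximal intersecting family $\cF$ of $k$-dimensional subspaces, define $\cI(\cF) = \left\{ W : \dim W = k, \; \forall F \in \cF, \; \dim(W \cap F) \ge 1 \right\}$; then $\cF$ is intersecting iff $\cF \subset \cI(\cF)$ and maximal iff $\cF = \cI(\cF)$. Pick a minimal generating set $\cF_0 = \{F_1, \dots, F_s\}$, so that $\cF = \cI(\cF_0)$ but $\cF \subsetneq \cI(\cF_0 \setminus \{F_i\})$ for each $i$; choose $G_i \in \cI(\cF_0 \setminus \{F_i\}) \setminus \cF$, so $\dim(G_i \cap F_j) \ge 1$ for $j \ne i$ while $\dim(G_i \cap F_i) = 0$. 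The map $\cF \mapsto \cF_0$ is injective since $\cF = \cI(\cF_0)$, so it remains only to bound $s$.

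To bound $s$ I would invoke the vector-space analogue of the Bollob\'as set-pairs inequality. The relevant statement (due to Lov\'asz, via his tensor/exterior-algebra argument, and also appearing in work of Frankl) asserts: if $A_1, \dots, A_m$ and $B_1, \dots, B_m$ are subspaces of a vector space with $\dim A_i = a$, $\dim B_i = b$, $\dim(A_i \cap B_i) = 0$ and $\dim(A_i \cap B_j) \ge 1$ for all $i < j$, then $m \le \binom{a+b}{a}$. Applying this to the $2s$ pairs obtained, as in Section~\ref{sec-meth}, by taking $(A_i, B_i) = (F_i, G_i)$ for $1 \le i \le s$ and $(A_i, B_i) = (G_{i-s}, F_{i-s})$ for $s+1 \le i \le 2s$ — all subspaces here having dimension $k$ — yields $2s \le \binom{2k}{k}$, hence $s \le \frac12 \binom{2k}{k} = \binom{2k-1}{k-1}$. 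Since each maximal family corresponds to a distinct generating set of at most $\binom{2k-1}{k-1}$ subspaces, and there are ${n \brack k}_q$ subspaces in total, the number of maximal families is at most $\sum_{i=0}^{\binom{2k-1}{k-1}} \binom{{n \brack k}_q}{i}$; bounding each term by ${n \brack k}_q^{\binom{2k-1}{k-1}}$ and using that this quantity exceeds the number of summands gives the claimed clean bound.

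The one genuine subtlety — and the step I would be most careful about — is citing the correct set-pairs inequality over vector spaces: one must verify that the pairs constructed genuinely satisfy the skew (one-sided) hypotheses $\dim(A_i \cap B_j) \ge 1$ for $i < j$, which here follows symmetrically from $\dim(F_i \cap G_i) = 0$ and $\dim(F_i \cap G_j), \dim(G_i \cap F_j) \ge 1$ for $i \ne j$ exactly as in the hypergraph case. Note also that, unlike F\"uredi's $t$-intersecting bound which is genuinely one-sided, the simplest vector-space version is the symmetric one, so as in Proposition~\ref{prop-maxint} we apply it to $2s$ pairs and divide by two rather than invoking a skew version directly; this is why the bound is $\binom{2k-1}{k-1}$ and matches the hypergraph case with $t=1$. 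Everything else is bookkeeping identical to the proofs of Propositions~\ref{prop-maxint} and~\ref{prop-perm-max}.
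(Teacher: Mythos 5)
Your proof is correct and follows the paper's argument essentially verbatim: pass to a minimal generating set, apply Lov\'asz's subspace analogue of the Bollob\'as set-pairs inequality to the doubled system of $2s$ pairs, and count small generating sets. One small correction to your closing remark, though: the inequality you need (and the one you actually state, with the condition for $i<j$) is the \emph{skew} subspace version, not the symmetric one, and this is essential rather than incidental. If you tried to apply the symmetric version to the $2s$ pairs you would need $\dim(G_i \cap G_j) \ge 1$ for $i \neq j$, which is not available since the $G_i$ lie outside the maximal family $\cF$ and need not pairwise intersect; it is precisely because the skew hypothesis never asks about a $(G_i, G_j)$ pair (those would require $i > s \ge j$, violating $i<j$) that the doubling trick goes through. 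The same is true in the hypergraph case: Proposition~\ref{prop-maxint} also uses Frankl's skew version, not a symmetric one.
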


\begin{proof}
Once again, we follow the strategy of Proposition~\ref{prop-maxint}, seeking to show that every maximal intersecting family of subspaces contains a minimal generating set of at most $\frac12 \binom{2k}{k} = \binom{2k-1}{k-1}$ subspaces.  We must replace Theorem~\ref{thm-setpairs} with its vector space analogue, proven by Lov\'asz~\cite{lov77} and appearing in the form below in~\cite{bf}.

\begin{theorem}[Lov\'asz] \label{thm-vecsetpairs}
Let $U_1, \hdots, U_m$ be $a$-dimensional and $V_1, \hdots, V_m$ be $b$-dimensional subspaces of a vector space $W$ over a field $\bF$ such that $U_i \cap V_i = \{ 0 \}$ and $U_i \cap V_i \neq \{ 0 \}$ for $1 \le i < j \le m$.  Then $m \le \binom{a+b}{a}$.
\end{theorem}

This gives a map from maximal intersecting families of subspaces to sets of at most $\binom{2k-1}{k-1}$ subspaces, resulting in the above bound.
\end{proof}

This proposition gives a value for $M$ to be used when applying Lemma~\ref{lem-cond}.  As stated in Section~\ref{subsec-vs}, the corresponding extremal result was proven by Hsieh~\cite{h75}, who showed that when $n \ge 2k+1$, the largest intersecting families are trivial, with size $N_0 = {n-1 \brack k-1}_q$.  Each trivial family fixes a one-dimensial subspace, and hence there are $T = {n \brack 1}_q$ maximal trivial families.  The intersection of any two fixes a two-dimensional subspace, and thus has size $N_2 = {n-2 \brack k-2}_q$.

A stability result was obtained by Blokhuis, Brouwer, Chowdhury, Frankl, Mussche, Patk\'os and Sz\H{o}nyi~\cite{hm-vs}, who determined the size of the largest non-trivial intersecting family to be $N_1 = {n-1 \brack k-1}_q - q^{k(k-1)} {n-k-1 \brack k-1} + q^k$ when $q \ge 3$ and $n \ge 2k + 1$ or $q = 2$ and $n \ge 2k + 2$.  With these results in hand, we prove Theorem~\ref{thm-vec}.

\begin{proof}[Proof of Theorem~\ref{thm-vec}]
We shall first verify that~\eqref{cond-allsizes} of Lemma~\ref{lem-cond} holds, thus showing that almost all intersecting families are trivial.  Since either $q \ge 3$ and $n \ge 2k+1$ or $q=2$ and $n \ge 2k+2$, the extremal and stability results hold, and thus $M \le {n \brack k}_q^{\binom{2k-1}{k-1}}$, $N_0 = {n-1 \brack k-1}_q$ and $N_1 = {n-1 \brack k-1}_q - q^{k(k-1)} {n - k - 1 \brack k-1}_q  + q^k$.  This gives
\begin{equation} \label{eqn-vec}
\log M + N_1 - N_0 = \log \left( {n \brack k}_q \right) \binom{2k-1}{k-1} + q^k - q^{k(k-1)} {n-k-1 \brack k-1}_q.
\end{equation}

We bound the Gaussian binomial coefficients above and below by
\[ q^{(n-k)k} \le {n \brack k}_q = \prod_{i=0}^{k-1} \frac{q^{n-i}-1}{q^{k-i}-1} \le (2q^{n-k})^k \]
and use the fact that $\binom{2k-1}{k-1} < 4^k$ to show that the right-hand side of~\eqref{eqn-vec} is at most
\begin{equation} \label{eqn-vec2}
k(n-k) 4^k \log \left( 2q \right)+ q^k - q^{k(k-1)} \cdot q^{(n-2k)(k-1)} \le n^2 4^k \log \left( 2q \right)+ q^k - q^{(n-k)(k-1)}.
\end{equation}

If $k = 2$, then the right-hand side of~\eqref{eqn-vec2} is $16n^2 \log(2q) + q^2 - q^{n-2} \rightarrow - \infty$ as $n \rightarrow \infty$.  On the other hand, if $3 \le k < n/2$ then $(n-k)(k-1) \ge 2(n-3)$, so the right-hand side of~\eqref{eqn-vec2} is bounded above by $n^2 2^n \log \left( 2q \right) + q^{n/2} - q^{2(n-3)} \rightarrow -\infty$.  In either case, we have $\log M + N_1 - N_0 \rightarrow - \infty$, and, by Lemma~\ref{lem-cond}, almost all intersecting families are trivial.

Now we need only show that there are $\left( {n \brack 1}_q + o(1) \right) 2^{{n-1 \brack k-1}_q}$ trivial families, which will follow by verifying~\eqref{cond-union} and applying Lemma~\ref{lem-union}.  We have
\begin{align*}
2 \log T + N_2 - N_0 &= 2 \log \left( {n \brack 1}_q \right) + {n-2 \brack k-2}_q - {n-1 \brack k-1}_q \\
	&\le 2n \log q - \left( 1 - \frac{q^{k-1} - 1}{q^{n-1} - 1} \right) {n-1 \brack k-1}_q \le 2n \log q - \frac12 q^{(k-1)(n-k)} \rightarrow - \infty,
\end{align*}
as required.  This completes the proof.
\end{proof}

\section{Concluding remarks}\label{sec-cr}

In this paper we study the typical structure of intersecting families within various settings in discrete mathematics.  We estimate the number of maximal intersecting families, and from these bounds are able to derive asymptotics of the total number of intersecting families and deduce sparse versions of the extremal results.  Nevertheless, numerous open problems remain.

One of the motivating problems behind this project was the sparse analogue of the Erd\H{o}s--Ko--Rado theorem.  We show that for $3 \le k \le n/4$ and $p$ not too small, the largest intersecting subhypergraphs of the random hypergraph $\cH^k(n,p)$ are trivial with high probability.  This extends previous results, which held for $k = O \left( \sqrt{n \log n} \right)$.  However, there is a considerable gap between our lower bound on $p$ in Theorem~\ref{thm-hyp-random} and the upper bound for which it is known that the sparse Erd\H{o}s--Ko--Rado theorem is false.  What happens in this intermediate range of probabilities?  Different techniques will also be required to study the problem for larger $k$; in this direction, Hamm and Kahn~\cite{hk} recently established the sparse result for $n = 2k+1$ and $p = 1 - c$ for some $c > 0$.

There is also the question of obtaining the sharp asymptotics on the number of intersecting $k$-uniform hypergraphs, $I(n,k)$.  Theorem~\ref{thm-hyp-count} gives these asymptotics for $n \ge 3k + 8 \ln k$, showing that almost all intersecting hypergraphs are trivial.  For $n \ge 2k+1$, Theorem~\ref{thm-hyp-containers} provides a slightly weaker result, showing $\log I(n,k) \approx \binom{n-1}{k-1}$.  New methods will be required to obtain the asymptotics of $I(n,k)$ itself for the complete range, as our bounds on the maximal intersecting families are not strong enough to apply when $n \le 3k$.  It is worth noting that when $n = 2k+1$, the typical intersecting families are non-trivial, as the Hilton--Milner families outnumber the trivial ones.  However, we suspect that $n \ge 2k+2$ may already suffice for the trivial families to become typical.

The problem of enumerating maximal intersecting structures is interesting in its own right.  Here we provide reasonably sharp upper bounds through the use of the Bollob\'as set-pairs inequality and its variants.  We can also obtain lower bounds of a similar nature.  For instance, form $t$-intersecting $k$-uniform hypergraphs by, for each bipartition $[2(k-t)+2] = X_1 \cup X_2$, selecting one of $X_1 \cup [2(k-t) + 3, 2k - t + 1]$ or $X_2 \cup [2(k-t) + 3, 2k-t+1]$ to be an edge in the hypergraph.  This gives $2^{\binom{2(k-t)+1}{k-t}}$ hypergraphs, each of which can be extended to a distinct maximal $t$-intersecting hypergraph over $[n]$, while Proposition~\ref{prop-hyp-max} gives an upper bound of $2^{n \binom{2(k-t)+1}{k-t}}$.  Related constructions give similar lower bounds in the permutation and vector space settings.  We believe the lower bounds to be closer to the truth, and more refined arguments could bridge the gap.  Recent work of Nagy and Patk\'os~\cite{np15} bridges the gap between the bounds in the case of intersecting hypergraphs.

Finally, one can pursue these ideas for various other extremal problems in discrete mathematics.  For example, we say that a family of permutations of $[n]$ is \emph{$t$-set-intersecting} if for every pair of permutations $\sigma, \pi$ there is some $t$-set $X \subset [n]$ such that $\sigma(X) = \pi(X)$.  Ellis~\cite{e12} proved that for $n$ sufficiently large, the biggest $t$-set-intersecting families are trivial; namely, they send a fixed set of $t$ indices to a fixed set of $t$ images.  Can we show that these trivial families are also typical?

\bigskip

\noindent \textbf{Acknowledgement}  We would like to thank the University of Szeged for their kind hospitality, and the referees for their careful reading of this paper.

\end{document}